\documentclass[11pt]{amsart}

\usepackage{amsfonts,amsmath,amssymb,amscd}

\usepackage{amsthm}
\usepackage{latexsym}
\usepackage[dvips]{graphicx}
\usepackage[dvips]{psfrag}
\usepackage[dvips]{color}
\usepackage{xypic}
\usepackage[abs]{overpic}
\usepackage{caption}
\usepackage[all]{xy}
\usepackage{pinlabel}

\newtheorem{theorem}{Theorem}[section]
\newtheorem{lemma}[theorem]{Lemma}

\newtheorem{prop}[theorem]{Proposition}

\theoremstyle{definition}
\newtheorem{definition}[theorem]{Definition}
\newtheorem{example}[theorem]{Example}

\newcommand{\Nbd}{\operatorname{Nbd}}
\newcommand{\cl}{\operatorname{cl}}

\begin{document}

\title[Braid group and leveling of a knot]{Braid group and leveling of a knot}

\author{Sangbum Cho}\thanks{
The first-named author is supported by the Basic Science Research Program through the National Research Foundation of Korea (NRF) funded by the Ministry of Education (NRF-201800000001768).}

\address{
Department of Mathematics Education \newline
\indent Hanyang University, Seoul 133-791, Korea}
\email{scho@hanyang.ac.kr}

\author{Yuya Koda}\thanks{The second-named author is supported in part by JSPS KAKENHI Grant Numbers 15H03620, 17K05254, 17H06463, and JST CREST Grant Number JPMJCR17J4.}

\address{
Department of Mathematics \newline
\indent Hiroshima University, 1-3-1 Kagamiyama, Higashi-Hiroshima, 739-8526, Japan}
\email{ykoda@hiroshima-u.ac.jp}

\author{Arim Seo}
\thanks{}
\address{Department of Mathematics Education  \newline
\indent Korea University, Seoul 136-701, Korea}
\email{arimseo@korea.ac.kr}

\subjclass[2000]{Primary 57M25}

\date{\today}

\begin{abstract}
Any knot $K$ in genus-$1$ $1$-bridge position can be moved by isotopy to lie in a union of $n$ parallel tori tubed by $n-1$ tubes so that $K$ intersects each tube in two spanning arcs, which we call a leveling of the position.
The minimal $n$ for which this is possible is an invariant of the position, called the level number.
In this work, we describe the leveling by the braid group on two points in the torus, which yields a numerical invariant of the position, called the $(1, 1)$-length.
We show that the $(1, 1)$-length equals the level number.
We then find braid descriptions for $(1,1)$-positions of all $2$-bridge knots providing upper bounds for their level numbers, and also show that the $(-2, 3, 7)$-pretzel knot has level number two.
\end{abstract}

\maketitle

\section{Introduction}
A knot $K$ in the $3$-sphere is said to be in {\it genus-$1$ $1$-bridge position}, or simply in {\it $(1,1)$-position}, with respect to a standard torus $T$ if $T$ splits the $3$-sphere into two solid tori $V$ and $W$, and each of $K\cap V$ and $K\cap W$ is a single arc that is properly embedded in $V$ and $W$ respectively, and is parallel into $T$.
A knot is called a {\it $(1, 1)$-knot} if it can be isotoped to be in $(1, 1)$-position with respect to some standard torus.
In a collar $T \times [0,1]$ of $T$ in $V$, one may take $n$ parallel copies of the form $T\times \{t\}$ and tube each of the two consecutive copies to obtain a surface $F_n$ of genus $n$ in $T\times[0,1]$.
Then we say that a knot $K$ lies in {\it $n$-level position} with respect to $T$ if $K \subset F_n$ and $K$ meets each of the tubes in two arcs connecting the two ends of the tube.
Any knot in $(1,1)$-position with respect to $T$ can be moved into a knot in $n$-level position with respect to $T$ for some $n$ in a natural way.
We call it a {\it leveling} of the $(1,1)$-position.
The minimum such $n$ over all the possible level positions with respect to $T$ is an invariant of the $(1,1)$-position, called the {\it level number} of the position.
In particular, a knot has a $(1,1)$-position of level number one if and only if it is a torus knot.

The level number of a $(1, 1)$-position is equal to a Hempel-type complexity called the arc distance of the position.
Given a $(1, 1)$-position of $K$ with respect to $T$ as above,
the {\it arc complex} is the simplicial complex whose vertices are the isotopy classes of simple arcs in $T$ connecting the two points $K \cap T$.
Then the {\it arc distance} of the $(1, 1)$-position for $K$ is defined to be the minimum simplicial distance between the collection of vertices represented by arcs in $T$ from that are parallel to $K \cap V$ in $V$ and the analogous collection for $K \cap W$.
In fact, it was shown in \cite{CMcCS} that the level number equals the arc distance in more general setting, taking a genus-$g$ Heegaard surface $F$ and the $(g, 1)$-position of $K$ with respect to $F$ rather than the torus $T$ and the $(1, 1)$-position with respect to $T$.

On the other hand, any $(1,1)$-position of a knot can be described algebraically using the braid group on two points in the torus.
In fact, each of the $(1,1)$-positions of a knot corresponds to a collection of the words of the ``reduced'' braid group, that is the quotient of the braid group by its center.
Such a description of $(1, 1)$-positions was introduced and used in \cite{CMcC2} to compute the slope invariants of $(1, 1)$-tunnels of knots.
Using the words in the collection corresponding to a $(1, 1)$-position, we define a new numerical invariant, called the {\it $(1,1)$-length} of the position.
The main goal of this work is to show that the level number of a $(1, 1)$-position equals the $(1,1)$-length of the position (Theorem \ref{thm:equality}).

In Section \ref{sec:torus_leveling}, we introduce the level position of a $(1, 1)$-knot and leveling of a $(1, 1)$-position.
In Sections \ref{sec:braid_group} and \ref{sec:torus_words}, the reduced braid group and the $(1, 1)$-length are defined, and then our main theorem is proved in Section \ref{sec:characterization}.
In Section \ref{sec:example}, we carry out some explicit computations for $(1, 1)$-positions of $2$-bridge knots. Specifically, we find braid word descriptions for all the $(1,1)$-positions of each $2$-bridge knot, and provide upper bounds for their level numbers.
We also give a conjectural description of the $2$-bridge knots with level number $2$.
In the final section, we show that each of the $(1, 1)$-positions of $(-2, 3, 7)$-pretzel knot has level number two and obtain their braid descriptions.

Throughout the paper, we denote by $\Nbd(X)$ and  $\cl(X)$ a regular neighborhood of $X$ and the closure of $X$ respectively, where $X$ is a subspace of a polyhedral space.
The ambient spaces will be always clear from the context.
Finally, the authors are deeply grateful to Darryl McCullough for his valuable advice and comments.

\section{Leveling and level number}
\label{sec:torus_leveling}

Let $T$ be a standard torus in the $3$-sphere which splits the $3$-sphere into two solid tori $V$ and $W$.
We regard a collar of $T$ in $V$ as the product $T \times [0, 1]$ so that $T = T \times \{0\}$.
Choose the numbers $t_1, t_2, \ldots, t_n$ with $0 = t_1 < t_2 < \cdots <t_n = 1$, and denote the {\it level torus} $T \times \{t_j\}$ in the collar by $T_j$.
We construct a closed orientable surface $F_n$ of genus $n$ in the collar as follows.

First choose $n-1$ disks $D^1, D^2, \ldots, D^{n-1}$ in $T = T_1$ so that each $D^j$ is disjoint from $D^{j+1}$, and denote by $C_j$ the tube $\partial D^j \times [t_j, t_{j+1}]$ for $j \in \{1, 2, \ldots, n-1\}$.
Then, from the union $T_1 \cup C_1 \cup \cdots \cup T_{n-1} \cup C_{n-1} \cup T_n$, remove the interiors of $D^j \times \{t_j\}$ and $D^j \times \{t_{j+1}\}$ for $j \in \{1, 2, \ldots, n-1\}$ to get a closed surface $F_n$ of genus $n$.
We call such a surface $F_n$ is a surface {\it determined by} $T$.
The subsurfaces $F_n \cap T_1$ and $F_n \cap T_n$ are $1$-holed tori while $F_n \cap T_j$ for $j \in \{2, 3, \ldots, n-1\}$ $2$-holed tori. (See Figure \ref{fig:torus_level}.)

\medskip

\begin{center}
\includegraphics[width=8cm]{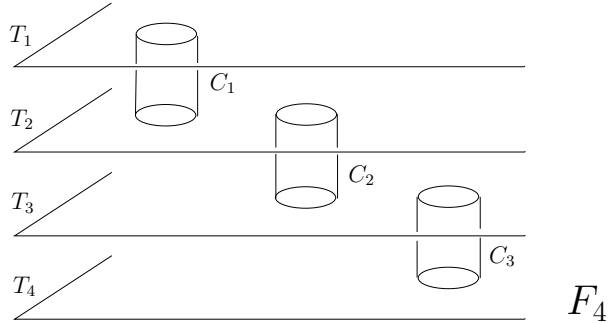}
\captionof{figure}{The genus-$4$ surface $F_4$.}
\label{fig:torus_level}
\end{center}

For an integer $n \geq 2$, a knot $K$ in the $3$-sphere is said to lie in {\it $n$-level position} with respect to $T$ if there exists a surface $F_n$ determined by $T$ such that $K \subset F_n$ and $K \cap C_j$ is a pair of  arcs connecting the two boundary circles of the tube $C_j$ for $j \in \{1, 2, \ldots, n-1\}$.
A knot $K$ lies in {\it $1$-level position} if $K$ lies in a standard torus in the $3$-sphere.
By definition, when a knot $K$ lies in $n$-level position with respect to $T$ for $n \geq 2$, then $F_n \cap T_j \cap K$ is a single arc properly embedded in $F_n \cap T_j$ if $j \in \{1, n\}$.
If $j \in \{2, 3,  \ldots, n-1\}$, then $F_n \cap T_j \cap K$ is a pair of disjoint arcs properly embedded in $F_n \cap T_j$ such that each arc connects the two boundary circles of  $F_n \cap T_j$.

\medskip

\begin{center}
\includegraphics[width=9.5cm, clip]{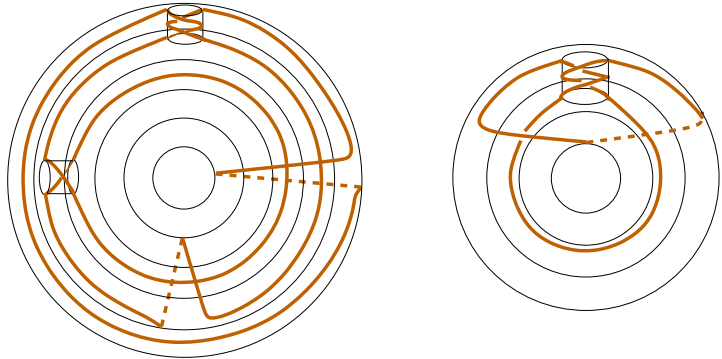}
\captionof{figure}{Examples of leveling. The second one is a 2-level position of the figure-eight knot.}
\label{fig:leveling_example}
\end{center}

\medskip

When a knot $K$ lies in $n$-level position with respect to $T$, $K$ can be put into a $(1, 1)$-position with respect to $T$ by pushing the arc $K \cap T$ into the solid torus $W$.
It is not hard to see that the converse is also true.
That is, given a knot $K$ in $(1, 1)$-position with respect to $T$, one can isotope $K$, keeping $K \cap V$ in $V$ and $K \cap W$ in $W$ at all times, so that $K$ lies in $n$-level position with respect to $T$ for some $n$.
This is also a direct consequence of Theorem 3.2 in \cite{CMcCS}.
Such a level position is called a {\it leveling} of the $(1, 1)$-position.
Now we define an integral invariant which measures the complexity of a $(1, 1)$-position of a knot.

\begin{definition}
The {\it level number} of a $(1, 1)$-position of $K$ is the minimum number of level tori over all the levelings of the $(1, 1)$-position.
The {\it level number} of a $(1, 1)$-knot $K$ is the minimum level number over all $(1, 1)$-positions of $K$.
\end{definition}

\section{The braid group and braid descriptions of $(1, 1)$-positions}
\label{sec:braid_group}

In this section, we briefly review a geometric interpretation of the braid group on two points in the torus, and braid descriptions of $(1, 1)$-positions, which were introduced in \cite{CMcC2}.
Let $T$ be a torus and let $N = T \times [0, 1]$.
Fix two points $b$ and $w$ in $T$, and denote by $b_t$, $w_t$ and $T_t$ the points $b \times \{t\}$ and $w \times \{t\}$ and the slice $T \times \{t\}$ respectively for each $t \in [0, 1]$.
Consider a pair of disjoint arcs properly embedded in $N$ such that each endpoint of the arcs is one of $b_0$, $b_1$, $w_0$ and $w_1$, and each of the arcs meets each slice $T_t$ transversely in a single point.
There is an obvious multiplication operation on the collection of such pairs defined by ``stacking'' two pairs.
Two such pairs are called {\it equivalent} if there is an isotopy $J_s$ of $N$ such that
\begin{enumerate}
\item $J_0 = id _N$,
\item $J_s |_{\partial N} = id_{\partial N}$ for $s \in [0, 1]$,
\item $J_s (T_t)= T_t$ for $t \in [0, 1]$ and $s \in [0, 1]$, and
\item $J_1$ sends one pair to the other pair.
\end{enumerate}
The equivalence classes are called {\it braids}, and the above multiplication operation induces a group structure on them.
We call the group of all braids under the induced multiplication the {\it $2$-braid group} on the torus.
A finite presentation of the $2$-braid group on the torus is well-known, which can be found in \cite{B}, \cite{Ta} or in \cite{CMcC2}.
We rewrite it as:
$$\langle~ m, l, s ~|~ msms=smsm, ~lsls=slsl, ~l^{-1}mlm^{-1}=s^2,
~lsms^{-1}=smsl~\rangle.$$

To describe the generators  $m$, $l$ and $s$ geometrically, first fix oriented meridian and longitude curves $m$ and $l$ in $T$.
(For convenience, we use the same symbols $m$ and $l$ for the generators and the curves.)
Denote by $b$ the point $m \cap l$, and fix a point $w$ disjoint from $m \cup l$.
Choose an arc $\alpha$ in $T$ connecting $b$ and $w$, and meeting $m \cup l$ only in $b$. There are four isotopy classes of such arcs in $T$, and we choose $\alpha$ such that $\alpha$ leaves $m$ in the direction of negative orientation of $l$ and leaves $l$ in the direction of positive orientation of $m$.
See Figure \ref{fig:torus}.

\begin{center}
\centering
\includegraphics[width=5cm, clip]{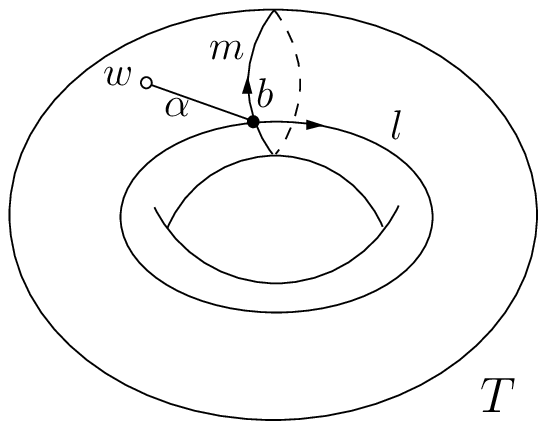}
\captionof{figure}{}
\label{fig:torus}
\end{center}

Then the generator $m$ (respectively $l$) can be represented by a pair of arcs in $N$ of which one arc connects the vertices $b_0$ and $b_1$ after sliding around $m$ (respectively $l$) in the direction of the orientation on $m$ (respectively $l$), while the other one is vertical as in Figure \ref{fig:braid}.
The element $s$ is represented by a pair of arcs which are half-twisted  as in Figure \ref{fig:braid2} (a).
Precisely, we choose a disk $D$ in $T$ such that $\alpha$ is properly embedded in $D$ and $D$ meets $m \cup l$ only in $b$.
Then the element $s$ is represented by a pair of spanning arcs of the cylinder $\partial D \times [0, 1]$ such that the arc connecting $b_0$ and $w_1$ over-crosses the other one once.
The relations can be verified directly.

\bigskip

\begin{center}
\includegraphics[width=10.5cm, clip]{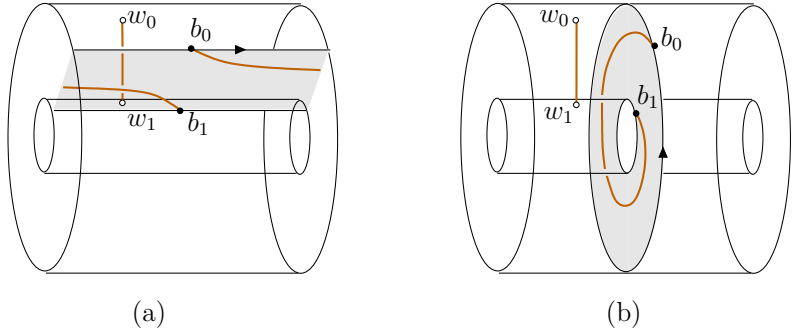}
\captionof{figure}{(a) a pair of arcs which represents $l$. (b) a pair of arcs which represents $m$.}
\label{fig:braid}
\end{center}

\bigskip

\begin{center}
\includegraphics[width=11cm, clip]{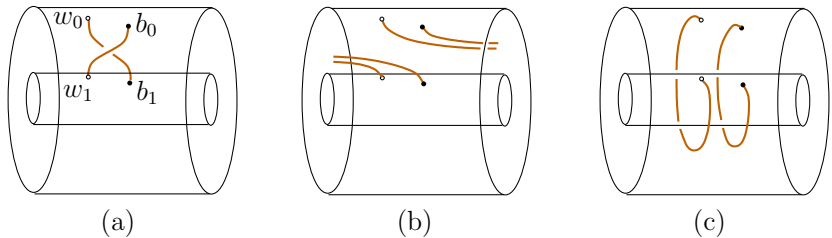}
\captionof{figure}{(a) a pair of arcs which represents $s$. (b) a pair of arcs which represents $lsls$. (c) a pair of arcs which represents $msms$.}
\label{fig:braid2}
\end{center}

Now we weaken condition $(2)$ for the isotopy $J_s$ to

\medskip
~~~$(2)'$ $J_1 |_{\partial N} = id_{\partial N}$.
\medskip

That is, we do not require that each $J_s$ is the identity on $\partial N$ for $s \in (0, 1)$.
We call the new equivalence classes of the pairs of arcs under this condition the {\it reduced braids}, and the group of all reduced braids the {\it reduced braid group} and denote it by $\mathcal B$.
The fundamental group $\pi_1(T) = \Bbb Z \times \Bbb Z$ of the torus $T$ can be considered as a subgroup of the $2$-braid group on the torus naturally.
That is, $\pi_1(T)$ is generated by $lsls$ and $msms$, which are represented by pairs of arcs described in Figure \ref{fig:braid2} (b) and (c).
One can verify that $\pi_1(T)$ is central in the $2$-braid group on the torus, and that the reduced braid group $\mathcal B$ is the quotient of the $2$-braid group by $\pi_1(T)$.
Thus by adding two relations $lsls = 1$ and $msms = 1$ to the above presentation, we obtain the following.

\begin{prop}
The reduced braid group $\mathcal B$ has the presentation
\begin{center}
$\langle ~m, l, s ~| ~msms = 1, ~lsls =1, ~l^{-1}mlm^{-1} = s^2 ~ \rangle$.
\end{center}
\end{prop}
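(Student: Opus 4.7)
The plan is to start from the known finite presentation of the full $2$-braid group recalled just above the proposition, and use the fact that the reduced braid group $\mathcal{B}$ is defined as the quotient by the central subgroup $\pi_1(T) \cong \mathbb{Z} \times \mathbb{Z}$. Since $\pi_1(T)$ is generated by the elements $msms$ and $lsls$ viewed as braids, the quotient group is presented by appending the relators $msms = 1$ and $lsls = 1$ to the given presentation of the $2$-braid group. The task therefore reduces to a purely algebraic simplification: show that with these two new relations in force, the first, second, and fourth relations of the full braid presentation are consequences of the three relations claimed in the proposition.

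First I would dispose of the first two relations. From $msms = 1$ one obtains $sms = m^{-1}$ and hence
$$smsm = (sms)\, m = m^{-1} m = 1 = msms,$$
so the braid relation $msms = smsm$ is automatic once $msms = 1$ is imposed. The same argument applied to $lsls = 1$ yields $slsl = 1 = lsls$.

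The main step, and what I expect to be the only part requiring a short calculation, is to derive the mixed relation $lsms^{-1} = smsl$ from the three remaining relations $msms = 1$, $lsls = 1$, and $l^{-1}mlm^{-1} = s^2$. Rewriting the last relation gives $mlm^{-1} = ls^2$, equivalently $lm^{-1} = m^{-1}l s^2 \cdot s^{-2}\cdot s^2$, which I would rearrange as
$$l\, m^{-1} s^{-2} = m^{-1} l.$$
On the other hand, $sms = m^{-1}$ implies $sm = m^{-1}s^{-1}$ and hence $sms^{-1} = m^{-1}s^{-2}$, so
$$lsms^{-1} = l\,(sms^{-1}) = l\, m^{-1} s^{-2} = m^{-1} l = (sms)\, l = smsl,$$
where the last-but-one equality uses the identity just derived from $l^{-1}mlm^{-1} = s^2$. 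This establishes the redundant relation.

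Combining the three steps, the presentation of the $2$-braid group together with the relators $msms$ and $lsls$ reduces to the three-relation presentation claimed. The only nontrivial verification is the chain of substitutions in the last display; all other reductions are immediate. Having already been told that $\pi_1(T)$ is central and generated by these two braids, no further topological input is required.
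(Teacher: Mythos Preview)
Your proposal is correct and follows exactly the approach the paper indicates: the paper simply asserts that adding the relators $msms=1$ and $lsls=1$ to the full $2$-braid group presentation yields the stated presentation, without writing out the algebraic verification, and you have supplied precisely that verification. The only cosmetic blemish is the intermediate expression ``$lm^{-1} = m^{-1}l s^2 \cdot s^{-2}\cdot s^2$'', which is correct but obscures rather than clarifies; it would read better to go directly from $mlm^{-1}=ls^2$ to $lm^{-1}=m^{-1}ls^2$ and then to $lm^{-1}s^{-2}=m^{-1}l$.
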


Now suppose that the torus $T$ for the definition of the braid group is a standard torus in the $3$-sphere which splits the $3$-sphere into two solid tori $V$ and $W$, and that the product $N = T \times [0, 1]$ is a collar of $T$ in $V$ with $T = T \times \{0\}$.
The curves $m$ and $l$ in $T$ bound meridian disks of $V$ and $W$ respectively.
Then any word $\omega$ in $\mathcal B$ defines a knot $K(\omega)$ together with a $(1, 1)$-position of $K(\omega)$ with respect to $T$.
That is, the knot $K(\omega)$ is obtained by attaching two arcs $\alpha = \alpha \times \{0\}$ and $\alpha \times \{1\}$ to a pair of arcs representing $\omega$ and then by pushing the arc $\alpha$ slightly into $W$.
Then the knot $K(\omega)$ lies in $(1, 1)$-position with respect to $T$.

The knot $K(\omega)$ is well defined, indeed if two words $\omega$ and $\omega'$ are equivalent in $\mathcal B$ (i.e. represent the same braid) then the resulting knots $K(\omega)$ and $K(\omega')$ are $(1, 1)$-isotopic (that is, isotopic by an isotopy of the $3$-sphere preserving $T$ at all times).
The knot $K(\omega)$ is said to lie in {\it braid position} (with respect to $T$), and the word $\omega$ is called a {\it braid description} of the $(1, 1)$-position for $K(\omega)$ (after fixing the arc $\alpha$).
On the other hand, any knot $K$ that lies in  $(1, 1)$-position with respect to $T$ is $(1, 1)$-isotopic to a knot $K(\omega)$ for some word $\omega$ in $\mathcal B$.
In fact, we will see that any knot $K$ in level position with respect to $T$ can be repositioned to a knot $K(\omega)$ by isotopy keeping $K \cap W$ in $W$ at all times (Lemma \ref{lem:leveling_and_braid_position}).

By the construction, we observe that $K(\omega)$ is $(1, 1)$-isotopic to each of $K(\omega m)$, $K(l\omega)$, $K(\omega s)$ and $K(s\omega)$.
In general, if $\omega_L$ is a word containing only powers of $l$ and $s$, and $\omega_M$ of $m$ and $s$, then $K(\omega)$ is $(1, 1)$-isotopic to $K(\omega_L \omega \,\omega_M)$ for any word $\omega$ (of course, $\omega_L$ and $\omega_M$ are possibly empty words).

\begin{definition}
Let $\omega$ and $\omega'$ be words in the reduced braid group $\mathcal B$.
We say that $\omega$ is {\it $(1, 1)$-equivalent} to $\omega'$ if $\omega'$ is equivalent to $\omega_L \omega \,\omega_M$ in $\mathcal B$ for some words $\omega_L$ containing only powers of $l$ and $s$, and $\omega_M$ of $m$ and $s$.
We write $\omega \sim \omega'$ when $\omega$ is $(1, 1)$-equivalent to $\omega'$.
\end{definition}

The knots $K(\omega)$ and $K(\omega')$ are $(1, 1)$-isotopic if and only if $\omega$ and $\omega'$ are $(1, 1)$-equivalent (with respect to the torus $T$).

\section{Torus words in the reduced braid group}
\label{sec:torus_words}

In this section, we introduce the words of some special types in the reduced braid group $\mathcal B$, called the $(1, 1)$-words and then define the $(1, 1)$-length, a numerical invariant for $(1, 1)$-positions.

Consider the subspace $(\Bbb R \times \Bbb Z) \cup (\Bbb Z \times \Bbb R)$ of $\Bbb R^2$ as a $1$-dimensional simplicial complex.
That is, the vertices are the lattice points $\Bbb Z \times \Bbb Z$, and every edge is of length $1$ and is either vertical or horizontal.
Let $\widetilde{\gamma}$ be the line segment in $\Bbb R^2$ connecting the origin $(0, 0)$ and the point $(p, q)$ where $p$ and $q$ are nonzero, relatively prime integers.
There are many shortest paths in the complex $(\Bbb R \times \Bbb Z) \cup (\Bbb Z \times \Bbb R)$ from $(0, 0)$ to $(p, q)$.
Among them, we will choose a special one, denoted by $\gamma_{(p, q)}$, which is ``close'' to the segment $\widetilde{\gamma}$ in some sense.

Let $R_1, R_2, \ldots, R_k$ be the sequence of rectangles bounded by four edges of the complex $(\Bbb R \times \Bbb Z) \cup (\Bbb Z \times \Bbb R)$ such that each $R_j$ intersects $\widetilde{\gamma}$ in its interior as in Figure \ref{fig:rectangle}.
We assume that $R_1$ and $R_k$ have the vertices $(0, 0)$ and $(p, q)$ respectively.
Note that the length of the sequence of rectangles for the line segment from $(0, 0)$ to $(p, q)$ is $p+q-1$ (that is, $k = p+q-1$).

\bigskip

\begin{center}
\includegraphics[width=9cm, clip]{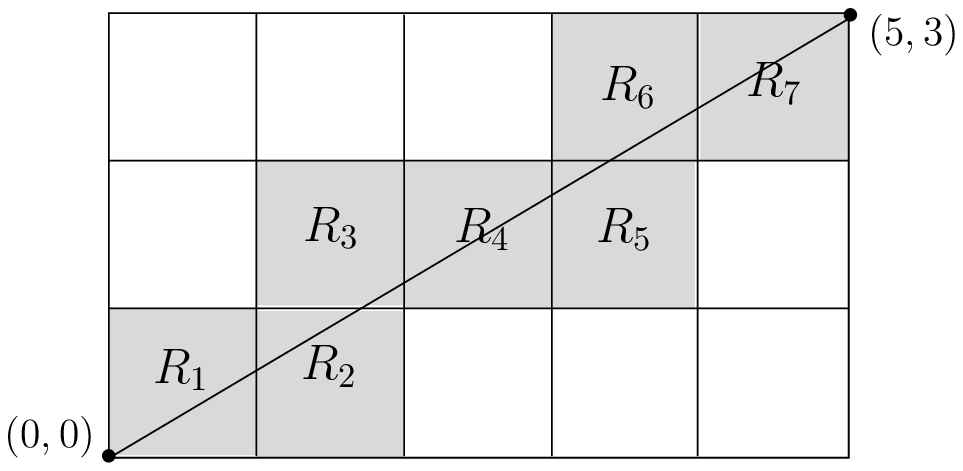}
\captionof{figure}{}
\label{fig:rectangle}
\end{center}

\begin{enumerate}
\item If $pq > 0$, choose all vertices of $R_j$ that are separated by $\widetilde{\gamma}$ from the lower right vertex of $R_j$ for each $j \in \{1, 2, \ldots, k\}$.
\item If $p > 0$ and $q < 0$, then choose vertices of $R_j$ as in the case of $pq > 0$ for each $j \in \{1, 2, \ldots, k-1\}$, and choose the two upper vertices of $R_k$.
\item If $p < 0$ and $q > 0$, then choose vertices of $R_j$ as in the case of $pq > 0$ for each $j \in \{2, 3, \ldots, k\}$, and choose the two upper vertices of $R_1$.
\end{enumerate}

Notice that some vertices may be chosen more than once.
Given the vertex $(p, q)$, we have the unique shortest path in $(\Bbb R \times \Bbb Z) \cup (\Bbb Z \times \Bbb R)$ from $(0, 0)$ to $(p, q)$ passing through the vertices chosen in (1), (2) or (3).
Denote by $\gamma_{(p, q)}$ this shortest path.
Note that the path $\gamma_{(p, q)}$ depends only on the choice of $(p, q)$.
We give a direction on each horizontal edge of $\gamma_{(p, q)}$ so that it is directed to the right if $p > 0$, and to the left if $p < 0$.
Similarly, each vertical edge is directed upward if $q > 0$, and downward if $q < 0$ (see Figure \ref{fig:torus_word}).
We call $\gamma_{(p, q)}$ the {\it directed path} from $(0, 0)$ to $(p, q)$.
We also regard the segments $\widetilde{\gamma}$ from $(0, 0)$ to $(0, \pm 1)$ and to $(\pm 1, 0)$ themselves as directed paths $\gamma_{(0, \pm 1)}$ and $\gamma_{(\pm 1, 0)}$ respectively.

\bigskip

\begin{center}
\includegraphics[width=12cm, clip]{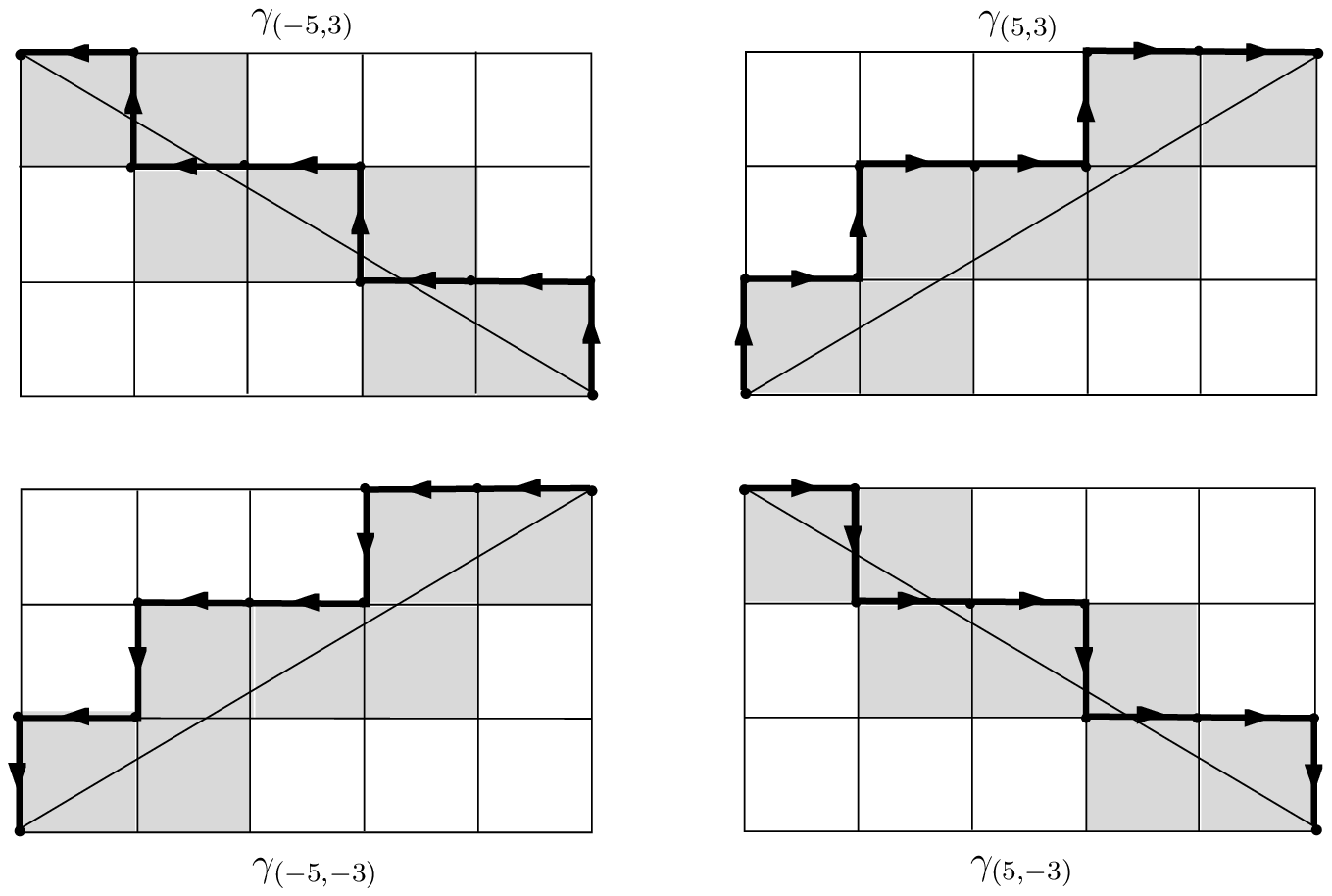}
\captionof{figure}{}
\label{fig:torus_word}
\end{center}

\begin{definition}
Let $p$ and $q$ be nonzero, relatively prime integers.
The {\it torus word} of type $(p, q)$ is a word in $\mathcal B$ determined by the directed path $\gamma_{(p, q)}$ as follows.
\begin{itemize}
\item Each horizontal edge gives a letter $l$ if it is directed to the right and a letter $l^{-1}$ if directed to the left. Similarly, each vertical edge gives a letter $m$ if it is directed upward and a letter $m^{-1}$ if directed downward.
    Then the torus word of type $(p, q)$ is the word read off along the directed path $\gamma_{(p, q)}$ from $(0, 0)$ to $(p, q)$.
\end{itemize}
Define the torus words of type $(1, 0)$, $(-1, 0)$, $(0, 1)$ and $(0, -1)$ to be $l$, $l ^{-1}$, $m$ and $m ^{-1}$ respectively.
\end{definition}

For example, the four directed paths $\gamma_{(5, 3)}$, $\gamma_{(-5, 3)}$, $\gamma_{(-5, -3)}$ and $\gamma_{(5, -3)}$ in Figure \ref{fig:torus_word} determine torus words of types  $(5, 3)$, $(-5, 3)$, $(-5, -3)$ and $(5, -3)$ as $mlml^2ml^2$, $ml^{-2} ml^{-2}ml^{-1}$, $l^{-2} m^{-1}l^{-2}m^{-1}l^{-1}m^{-1}$ and $lm^{-1}l^2m^{-1}l^2m^{-1}$ respectively.
By definition, any torus word does not contain $s$, and no torus word contains both $l$ and $l^{-1}$ (respectively both $m$ and $m^{-1}$) simultaneously.

Now we will describe a pair of arcs which represents a torus word in the reduced braid group $\mathcal B$ as follows.
Recall our basic setup for the definition of the reduced braid group $\mathcal B$ in Section \ref{sec:braid_group}.
Let $N = T \times [0, 1]$ be a regular neighborhood of the torus $T$.
Fix oriented meridian and longitude curves $m$ and $l$ in $T$ such that $m \cap l$ is the point $b$, and the point $w$ is disjoint from $m \cup l$.
Choose an arc $\alpha$ in $T$ connecting $b$ and $w$, and meeting $m \cup l$ only in $b$ such that $\alpha$ leaves $m$ in the direction of negative orientation of $l$ and leaves $l$ in the direction of positive orientation of $m$.
See Figure \ref{fig:torus}.
We also choose a disk $D$ in $T$ such that $\alpha$ is properly embedded in $D$ and $D$ meets $m \cup l$ only in $b$.
Under this setup the generators of the reduced braid group $\mathcal B$ were described geometrically in Section \ref{sec:braid_group}.
As usual, for any subspace $X$ of $T$, we denote by $X_t$ the parallel copy $X \times \{t\}$ in $T \times \{t\} \subset N$.

Consider a standard covering $\Bbb R^2$ of the torus $T$ in which the pre-images of $m$ and $l$ are vertical and horizontal lines $\Bbb Z \times \Bbb R$ and $\Bbb R \times \Bbb Z$ respectively. Then the pre-image of $m \cap l = \{b\}$ is the lattice $\Bbb Z \times \Bbb Z \subset \Bbb R^2$, and the pre-image of $w$ is disjoint from  $ (\Bbb Z \times \Bbb R) \cup (\Bbb R \times \Bbb Z)$.
We will assume that the pre-image of the disk $D$ intersects each rectangle bounded by four edges of $ (\Bbb Z \times \Bbb R) \cup (\Bbb R \times \Bbb Z)$ only in the lower right vertex.
That is, we give the orientation of $\Bbb R \times \Bbb Z$ from the left to the right, and the orientation of $\Bbb Z \times \Bbb R$ from the downside to the upside.

Recall that $\widetilde{\gamma}$ is the line segment in $\Bbb R^2$ connecting the origin $(0, 0)$ and the point $(p, q)$ where $p$ and $q$ are relatively prime integers, and $\gamma_{(p, q)}$ is the directed path in $\Bbb R^2$.
The path $\gamma_{(p, q)}$ is isotopic in $\Bbb R^2$ to the segment $\widetilde{\gamma}$ fixing the two endpoints (or is equal to $\widetilde{\gamma}$ if $(p, q)$ is $(0, \pm 1)$ or $(\pm 1, 0)$).
Furthermore, we can place the disk $D$ in so that $\gamma_{(p, q)}$ is isotopic to $\widetilde{\gamma}$ without crossing the pre-image of the disk $D$ except the two endpoints $(0, 0)$ and $(p, q)$.
That is, the arc $\widetilde{\gamma}$ divides each rectangle $R_j$ into two portions.
Then we say that the disk $D$ lies in {\it good position} (with respect to $\widetilde{\gamma}$) if $D$ lies in the intersection of the images of the portions having the lower right vertex of $R_j$.
(If $pq < 0$, both two portions of the first rectangle $R_1$ or the last rectangle $R_k$ have the lower right vertex of the rectangle.
In this case, we just choose the lower portion of the rectangle divided by $\widetilde{\gamma}$.)

We extend the covering $\Bbb R^2$ of $T$ to the covering $\Bbb R^2 \times [0, 1]$ of $N = T \times [0, 1]$ naturally.
Considering the directed path $\gamma_{(p, q)}$ in $\Bbb R^2$ as an embedding $\gamma_{(p, q)} = \gamma_{(p, q)} (t)$, for $t \in [0, 1]$, we define the arc $\overline{\gamma}_{(p, q)} = \{ (\gamma_{(p, q)}(t), t) | t \in [0, 1]\}$ in $\Bbb R^2 \times [0, 1]$.
Let $\lambda$ be the image of $\overline{\gamma}_{(p, q)}$ in $N = T \times [0, 1]$.
Then the arc $\lambda$ is properly embedded in $\cl (N - D \times [0, 1])$ and connects $b_0$ to $b_1$.
Furthermore, the arc $\lambda$ together with the vertical arc $\mu = w \times [0, 1]$ forms a pair of arcs which represents the torus word of type $(p, q)$.

On the other hand, considering $\widetilde{\gamma}$ as an embedding $\widetilde{\gamma} = \widetilde{\gamma}(t)$, for $t \in [0, 1]$, the arc $\overline{\gamma}_{(p, q)}$ is isotopic to the arc $\{ (\widetilde{\gamma}(t), t) | t \in [0, 1]\}$ without crossing the pre-image of $D \times [0, 1]$ except the two endpoints $(0, 0) \times \{0\}$ and $(p, q) \times \{1\}$.
Let $\lambda'$ be the image of $\{ (\widetilde{\gamma}(t), t) | t \in [0, 1]\}$ in $N = T \times [0, 1]$.
Then $\lambda'$ is an arc connecting the points $b_0$ to $b_1$ which is isotopic to $\lambda$ in $N$ without crossing $D \times [0, 1]$ except $b_0$ and $b_1$.
Note that the arc $\lambda'$ projects into a $(p, q)$-torus knot in $T$ which is the image of the line segment $\widetilde{\gamma}$.
We summarize the above observation as follows.

\begin{lemma}
Under the setup with $N = T \times [0, 1]$ and its covering $\Bbb R^2 \times [0, 1]$ in the above, a word $\omega$ in $\mathcal B$ is a torus word of type $(p, q)$ if and only if $\omega$ is represented by a pair of arcs $\{\lambda', \mu\}$ in $N$ satisfying:
\begin{itemize}
\item $\mu = \{w\} \times [0, 1]$,
\item $\lambda'$ projects into the $(p, q)$-torus knot in $T$ that is the image of the line segment $\widetilde{\gamma}$ in $\Bbb R^2$ joining $(0, 0)$ and $(p, q)$, and
\item $D$ lies in good position, that is, $\lambda'$ is isotopic to $\lambda$ in $N$ without crossing $D \times [0, 1]$ except the endpoints $b_0$ and $b_1$.
\end{itemize}
\label{lem:torus word}
\end{lemma}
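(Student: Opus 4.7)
The plan is to verify both directions of the equivalence by leveraging the explicit construction given just before the statement. The crux is to establish that the specific pair $\{\lambda,\mu\}$ obtained by lifting the directed path $\gamma_{(p,q)}$ to $\overline{\gamma}_{(p,q)}\subset\Bbb R^2\times[0,1]$ and projecting to $N$ represents the torus word of type $(p,q)$ in $\mathcal{B}$. Once this core claim is in hand, passing between $\lambda$ and the monotone arc $\lambda'$ coming from the straight segment $\widetilde{\gamma}$ is a routine level-preserving isotopy argument.

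To establish the core claim, I would parametrize $\gamma_{(p,q)}$ by its $|p|+|q|$ unit edges $e_1,\ldots,e_{|p|+|q|}$ traversed in order from $(0,0)$ to $(p,q)$ and subdivide $[0,1]$ into the corresponding subintervals. This exhibits the braid represented by $\{\lambda,\mu\}$ as the stacking product of $|p|+|q|$ sub-braids, one per edge. Working inside one rectangle $R_j$ at a time, I would compare each sub-braid with the generator pictures in Figures \ref{fig:braid} and \ref{fig:braid2} and check that a horizontal edge directed to the right gives $l$, one directed to the left gives $l^{-1}$, an upward vertical edge gives $m$, and a downward vertical edge gives $m^{-1}$. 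The good-position hypothesis places the lift of $D$ at the lower-right vertex of every rectangle that $\widetilde{\gamma}$ enters, and cases (1)--(3) in the definition of $\gamma_{(p,q)}$ are designed precisely so that $\gamma_{(p,q)}$ crosses each such rectangle on the side opposite that vertex. Consequently, in each subinterval the sub-arc of $\lambda$ avoids $D\times[0,1]$ and winds around the correct curve $m$ or $l$ in the direction prescribed by the edge, matching the generator picture up to a level-preserving isotopy of $N$ rel $\partial N$ that fixes $\mu$. Concatenating yields exactly the torus word of type $(p,q)$ read off from $\gamma_{(p,q)}$.

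With the core claim in hand, both directions of the lemma follow. For the forward direction, given a torus word $\omega$ of type $(p,q)$, the pair $\{\lambda,\mu\}$ represents $\omega$; since both $\lambda$ and $\lambda'$ are monotone in $t$ and isotopic in $N$ rel endpoints without crossing $D\times[0,1]$, this isotopy extends to a level-preserving ambient isotopy of $N$ that is the identity on $D\times[0,1]$ (hence on $\mu$) and the identity on $\partial N$ at the final time, so that $\{\lambda,\mu\}$ and $\{\lambda',\mu\}$ represent the same element of $\mathcal{B}$. Hence $\{\lambda',\mu\}$ satisfies the three listed properties. For the converse, any pair $\{\lambda',\mu\}$ representing $\omega$ with the three properties is equivalent via the same level-preserving ambient isotopy to $\{\lambda,\mu\}$, which by the core claim represents the torus word of type $(p,q)$.

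The main obstacle will be the edge-by-edge identification in the second paragraph: the level-preserving isotopy step is a standard monotone-arc argument, but verifying that the specific choice of the arc $\alpha$ and the good-position condition on $D$ force each sub-braid to coincide with $l^{\pm 1}$ or $m^{\pm 1}$, rather than some twisted variant involving $s$, is where all the geometric bookkeeping resides. In particular, the asymmetric convention that $\gamma_{(p,q)}$ avoids the lower-right corner of each rectangle is precisely what prevents spurious $s$-letters from being read off.
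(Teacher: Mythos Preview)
Your proposal is correct and follows essentially the same approach as the paper: the lemma is stated there as a summary of the preceding construction, which builds $\lambda$ from the lift $\overline{\gamma}_{(p,q)}$ of the directed path, asserts (via the implicit stacking decomposition) that $\{\lambda,\mu\}$ represents the torus word, and then observes that $\lambda'$ is isotopic to $\lambda$ in $N$ avoiding $D\times[0,1]$. Your edge-by-edge verification and your explanation of why the good-position convention on $D$ prevents spurious $s$-letters make explicit exactly the points the paper leaves as observations, so the two arguments coincide.
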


\begin{definition}
A {\it $(1, 1)$-word} in the reduced braid group $\mathcal B$ is the empty word or a word of the form
\begin{center}
$\omega_1 s^{n_1} \omega_2 s^{n_2} \cdots \omega_{k-1} s^{n_{k-1}} \omega_k$,
\end{center}
where, $\omega_j$ is a torus word and $n_j$ is an integer (possibly $0$).
\end{definition}

Not all words are $(1, 1)$-words.
For example a nonzero power of $s$ is not a $(1, 1)$-word.
But by definition any word $\omega$ in $\mathcal B$ is equivalent to some $(1, 1)$-word.
Given a nonempty $(1, 1)$-word $\omega'$, define $|\omega'|$ to be the minimum number $k$ for all expressions $\omega' = \omega_1 s^{n_1} \omega_2 s^{n_2} \cdots \omega_{k-1} s^{n_{k-1}} \omega_k$ as above.
For the empty word $1$, define $|1|$ to be $1$.

\begin{definition}
Let $\omega$ be a word in $\mathcal B$.
The {\it $(1, 1)$-length} of $\omega$, denoted by $\| \omega \|$, is the minimal number $|\omega'|$ over all the $(1, 1)$-words $\omega'$  which are $(1, 1)$-equivalent to $\omega$.
The {\it $(1, 1)$-length} of a $(1, 1)$-position is the $(1, 1)$-length over all braid description of the $(1, 1)$-position.
The {\it $(1, 1)$-length} of a $(1, 1)$-knot $K$ is the $(1, 1)$-length over all $(1, 1)$-positions of $K$.
\end{definition}

Every torus word has $(1, 1)$-length $1$, and we have  $\| m \| = \| l\| = \|s\| = \|m s m s\| = \|l s l s\| = \|l^{-1}m l m^{-1}\| = 1$.
Also $\|m l m l\| = 1$ since
$$mlml \sim ls^2mml \sim m^2l$$
and $m^2l$ is a torus word.
Observe that $ml^{\pm 1}$ and $l^{\pm 1}m^{-1}$ are torus words while $l^{\pm 1} m$ and $m^{-1}l^{\pm 1}$ are not.
But all of them have $(1, 1)$-length $1$.
For example,
$$m ^{-1} l ^{-1} \sim l ^{-1} m ^{-1} l ^{-1} m ^{-2}$$
and $l ^{-1} m ^{-1} l ^{-1} m ^{-2}$ is a torus word.

\section{Algebraic Description of Leveling}
\label{sec:characterization}

In this section, we will show that the level number is actually equal to the $(1, 1)$-length.
We will use the same notations for the definitions of  leveling and the braid description in the previous sections.

\begin{lemma}
Let $K$ be a knot which lies in $(1, 1)$-position with respect to a standard torus $T$ in the $3$-sphere.
If the $(1, 1)$-position has a leveling of an $n$-level position with respect to $T$, then there is a braid description $\omega$ of the $(1, 1)$-position such that $|w|$ is at most $n$.
Conversely, if $\omega$ is a braid description of the $(1, 1)$-position with $(1, 1)$-length $n$, then there is a leveling of the $(1, 1)$-position with $n$ level tori.
\label{lem:leveling_and_braid_position}
\end{lemma}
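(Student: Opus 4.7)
The plan is to prove both implications by a slab-by-slab analysis of the collar $N = T \times [0,1]$, in which each slab is either a tube or a thin neighborhood of a level torus and contributes a specific factor to the braid.

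For the forward direction, I would begin with $K$ in $n$-level position on the surface $F_n$, push $K \cap T_1$ slightly into $W$, and perturb $K$ so that the resulting 2-braid in $N$ is transverse to every slice $T_t$. This braid decomposes along the levels into $n - 1$ tube pieces and $n$ level-torus pieces. Each tube $C_j$ is a pure half-twist on its pair of spanning arcs, contributing a factor $s^{a_j}$ for some integer $a_j$. Each level torus $T_j$ contributes a torus word $\omega_j$ of some coprime type $(p_j, q_j)$, read off by applying Lemma~\ref{lem:torus word} to the arcs of $K$ in $F_n \cap T_j$: for the boundary levels $j = 1$ and $j = n$ it records the homotopy class of the single arc in the $1$-holed torus $F_n \cap T_j$, while for the intermediate levels $2 \le j \le n-1$ it records the relative homotopy class of the two disjoint spanning arcs in the $2$-holed torus $F_n \cap T_j$. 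Concatenating, the braid takes the form
\begin{equation*}
\omega = \omega_1 s^{a_1} \omega_2 s^{a_2} \cdots \omega_{n-1} s^{a_{n-1}} \omega_n,
\end{equation*}
which is a $(1,1)$-word with $n$ torus word pieces; hence $|\omega| \le n$.

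For the reverse direction, given a braid description $\omega$ with $(1,1)$-length $n$, choose a $(1,1)$-equivalent $(1,1)$-word $\omega' = \omega_1' s^{n_1'} \omega_2' s^{n_2'} \cdots \omega_n'$ realizing the length $n$. I would realize the pieces geometrically by reversing the construction above: each torus word $\omega_j'$ of type $(p_j,q_j)$ prescribes arcs on a prospective level torus $T_j$ via Lemma~\ref{lem:torus word}, while each $s^{n_j'}$ prescribes a tube $C_j$ containing two arcs with $n_j'$ half-twists. Gluing these standard pieces along the boundary circles of the tubes yields a genus-$n$ surface $F_n$ determined by $T$ with $K$ embedded on it as the desired $n$-level position, whose corresponding $(1,1)$-position is $(1,1)$-isotopic to $K(\omega)$.

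The main obstacle is the identification step in the forward direction: verifying that the arcs of $K$ in $F_n \cap T_j$ can be isotoped in a thin slab around $T_j$ into the standard configuration required by Lemma~\ref{lem:torus word}, with one arc vertical over $w$ and the other projecting to a torus knot arc in good position with respect to the supporting disk $D$. This requires choosing the disks $D^{j-1}$ and $D^j$ consistently relative to the fixed basepoints $b, w$ and reference arc $\alpha$, and checking that the successive level and tube contributions concatenate to the braid element of the level position in $\mathcal{B}$. Once this geometric identification is in hand, the reverse construction is essentially mechanical: the torus words tell us which arcs to place on the level tori and the $s$-powers tell us how to twist the tubes between them.
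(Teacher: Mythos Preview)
Your proposal is correct and follows essentially the same approach as the paper: a slab-by-slab decomposition of the collar in which tubes contribute powers of $s$ and level tori contribute torus words via Lemma~\ref{lem:torus word}, with the converse obtained by reversing the construction. The paper carries out the identification step you flag as the main obstacle by explicit level-by-level repositioning isotopies (moving each tube to $\partial D \times [j,j+1]$ and putting $D$ in good position for the relevant $(p_j,q_j)$), but the underlying argument is the same.
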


\begin{proof}
To begin with, we recall the previous setup for the braid description of a $(1, 1)$-position.
The torus $T$ splits the $3$-sphere into two solid tori $V$ and $W$, and we fix the oriented meridian and longitude curves $m$ and $l$ of $T$, meeting in a single point $b$ and bounding disks in $V$ and $W$ respectively.
We choose a point $w$ in $T$ disjoint from $m \cup l$, and an arc $\alpha$ in $T$ connecting $b$ and $w$, and meeting $m \cup l$ only in $b$ such that $\alpha$ leaves $m$ in the direction of negative orientation of $l$ and leaves $l$ in the direction of positive orientation of $m$ as in Figure \ref{fig:torus}.
We also choose a disk $D$ in $T$ such that $\alpha$ is properly embedded in $D$ and $D$ meets $m \cup l$ only in $b$.
In the standard covering $\Bbb R^2$ of the torus $T$, the pre-images of $m$ and $l$ are vertical and horizontal lines $\Bbb Z \times \Bbb R$ and $\Bbb R \times \Bbb Z$ respectively.
We may assume that the pre-image in $\Bbb R^2$ of the disk $D$ in $T$ intersects each rectangle bounded by four edges of $ (\Bbb Z \times \Bbb R) \cup (\Bbb R \times \Bbb Z)$ only in the lower right vertex.
We also consider the standard covering $\Bbb R^2 \times I$ of $T \times I$.
As usual, for any subspace $X$ of $T$ or $\mathbb R^2$, we denote by $X_t$ the parallel copy $X \times \{t\}$ in $T \times \{t\} \subset T \times I$ or in $\Bbb R^2 \times \{t\} \subset \Bbb R^2 \times I$.

\medskip

Now suppose that a given $(1, 1)$-position for a knot $K$ with respect to $T$ admits a leveling of $n$-level position with respect to $T$.
We will assume $n \geq 3$. (The case of $n = 1, 2$ is similar but simpler.)
Then $K$ lies in a genus-$n$ surface $F_n$ described as follows.
We first regard a collar of $T$ in $V$ as the product $T \times [1, n]$ rather than $T \times [0, 1]$ for convenience, and let $T = T \times \{1\}$.
There are $n-1$ disks $D^1, D^2, \ldots, D^{n-1}$ in $T = T_1$ such that each $D^j$ is disjoint from $D^{j+1}$.
We denote by $C_j$ the tube $\partial D^j \times [j, j+1]$ for $j \in \{1, 2, \ldots, n-1\}$.
Then the surface $F_n$ is obtained from the union $T_1 \cup C_1 \cup \cdots \cup T_{n-1} \cup C_{n-1} \cup T_n$ by removing the interiors of $D^j \times \{j\}$ and $D^j \times \{j+1\}$ for $j \in \{1, 2, \ldots, n-1\}$.
The intersection $K \cap C_j$ is a pair of spanning arcs connecting the two boundary circles of the tube $C_j$ for $j \in \{1, 2, \ldots, n-1\}$.
By an isotopy, we may assume the disk $D^1$ in $T_1$ is identified with the disk $D$ defined in the previous paragraph, and further the two arcs $K \cap D \times [1, 3/2]$ are exactly vertical ones, $b \times [1, 3/2]$ and $w \times [1, 3/2]$ (see Figure \ref{fig:first_level} (a)).

\begin{center}
\includegraphics[width=12.7cm, clip]{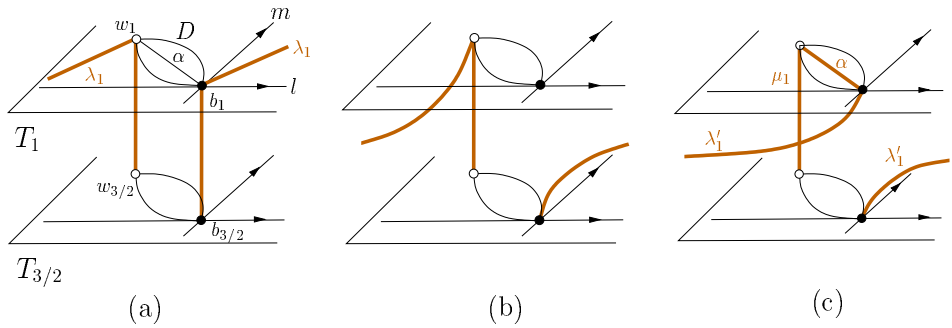}
\captionof{figure}{The reposition of $K \cap T \times [1, 3/2]$.}
\label{fig:first_level}
\end{center}

The union of the arc $K \cap T_1$ and the arc $\alpha$ is isotopic to a $(p_1, q_1)$-torus knot in $T_1$ for some relatively prime integers $p_1$ an $q_1$.
By an isotopy, we further assume that the disk $D$ lies in good position with respect to the line segment $\widetilde \gamma_1$ in $\mathbb R^2$ joining the points $(0, 0)$ and $(p_1, q_1)$ as in Lemma \ref{lem:torus word}.

We will reposition the arc $K \cap T \times [1, 3/2]$ by an isotopy fixing the portion of $K$ outside $T \times [1, 3/2]$ as follows.
We consider the arc $K \cap T_1$ as an embedding $\lambda_1 = \lambda_1(t)$ in $T_1$, for $t \in [1, 3/2]$, connecting $w_1$ to $b_1$.
Then the arc $K \cap T \times [1, 3/2]$ can be moved by isotopy of $K$ to the arc that is the union of the two arcs $\{(\lambda_1(t), t)|t \in [1, 3/2]\}$ and $w \times [1, 3/2]$, fixing the outside portion of $K$ (see Figure \ref{fig:first_level} (b)).
Next, by an isotopy of the arc $\{(\lambda_1(t), t)|t \in [1, 3/2]\}$, we move its top endpoint $w_1$ to the point $b_1$ along $\partial D$ so that the resulting arc, denoted by $\lambda'_1$, projects into the $(p_1, q_1)$-torus knot in $T_1$ which is the image of the line segment $\widetilde \gamma_1$ in $\mathbb R^2$.
Finally, adding the arc $\alpha$ to the unions of arcs $\lambda'_1$ and $w \times [1, 3/2]$, we have a repositioning of $K$ inside $T \times [1, 3/2]$, fixing the outside portion of $K$ (see Figure \ref{fig:first_level} (c)).
Denoted by $\mu_1$ the arc $w \times [1, 3/2]$, the pair of arcs $\{\lambda'_1, \mu_1\}$ represents the torus word, say $\omega_1$, of type $(p_1, q_1)$ by Lemma \ref{lem:torus word}.
We denote by $K$ again the knot after the repositioning of $K$.

The next step is to reposition the arcs of $K \cap T \times [2, 5/2]$ in a similar way, to obtain a torus word corresponding to the arcs.
We denote by $\lambda_2$ and $\mu$ the two arcs of $K \cap T_2$ where $\lambda_2$ has the endpoints $b_2$ in $\partial D_2$ and $b'_2$ in $\partial D^2_2$ while $\mu$ has $w_2$ in $\partial D_2$ and $w'_2$ in $\partial D^2_2$.
After an isotopy, we may assume the two arcs $K \cap D^2_2 \times [2, 5/2]$ are exactly vertical ones, $b'_2 \times [2, 5/2]$ and $w'_2 \times [2, 5/2]$ (see Figure \ref{fig:second_level} (a)).

The arc $\alpha_2$ is an arc properly embedded in $D_2$ with endpoints $b_2$ and $w_2$.
We choose an arc, denoted by $\beta$, properly embedded in $D^2_2$ with endpoints $b'_2$ and $w'_2$.
Then the union of $\lambda_2$, $\alpha_2$, $\mu$ and $\beta$ is isotopic to a $(p_2, q_2)$-torus knot in $T_2$ for some relatively prime integers $p_2$ and $q_2$.
By an isotopy again, we assume that the disk $D_2 = D \times \{2\}$ also lies in good position with respect to the line segment $\widetilde \gamma_2$ in $\mathbb R^2$ joining the points $(0, 0)$ and $(p_2, q_2)$ as in Lemma \ref{lem:torus word}.

\begin{center}
\includegraphics[width=12.7cm, clip]{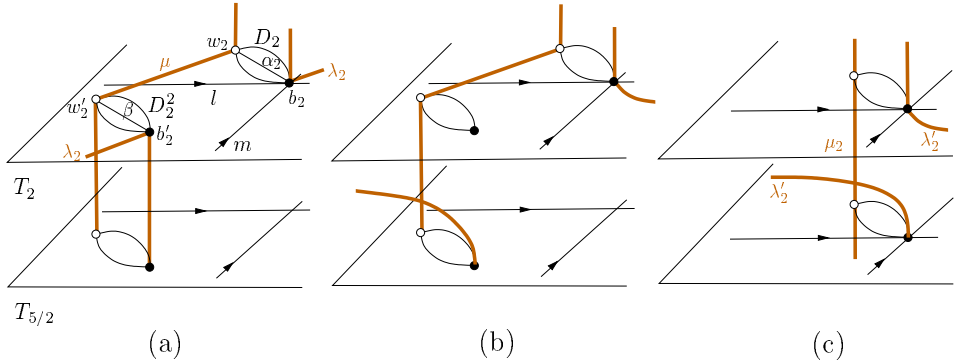}
\captionof{figure}{The reposition of $K \cap T \times [2, 5/2]$.}
\label{fig:second_level}
\end{center}

We regard the arc $\lambda_2$ as an embedding $\lambda_2 = \lambda_2(t)$ in $T_2$, for $t \in [2, 5/2]$, connecting $b_2$ to $b'_2$.
Then the arc $\lambda_2 \cup b'_2 \times [2, 5/2]$ can be moved by isotopy to the arc $\{(\lambda_1(t), t)|t \in [2, 5/2]\}$, fixing the remaining part of $K$ (see Figure \ref{fig:second_level} (b)).
Next, we isotope a small neighborhood of $T \times [2, n]$ so that the tube $C_2$ is identified with the tube $\partial D \times [2, 3]$, and
\begin{itemize}
\item the arc $\mu \cup w'_2 \times [2, 5/2]$ is moved to the vertical arc $w_2 \times [2, 5/2]$, and
\item the arc $\{(\lambda_2(t), t)|t \in [2, 5/2]\}$ is moved to the arc, denoted by $\lambda'_2$, joining $b_2$ and $b_{5/2}$, and it projects into the $(p_2, q_2)$-torus knot in $T_2$ which is the image of the line segment $\widetilde \gamma_2$ in $\Bbb R^2$.
\end{itemize}
See Figure \ref{fig:second_level} (c).
Denoted by $\mu_2$ the arc $w_2 \times [2, 5/2]$, the pair of arcs $\{\lambda'_2, \mu_2\}$ represents the torus word, say $\omega_2$, of type $(p_2, q_2)$, by Lemma \ref{lem:torus word} again.
We denote by $K$ again the knot after the repositioning of $K$.

We continue the process of the isotopy to obtain torus words $\omega_j$ consecutively for each $j \in \{2, 3, \ldots, n-1\}$.
For the final step, let us consider the collar $T \times [n, n+1]$ of $T_n$ in $\cl(V-T\times [1, n])$.
The disk $D^{n-1}_n$ was identified with the disk $D_{n-1}$, and hence the arc $K \cap T_n$ has endpoints $b_n$ and $w_n$ in $\partial D_n$.
The union of the arc $\lambda_n = K \cap T_n$ and the arc $\alpha_n$ is isotopic to the $(p_n, q_n)$-torus knot in $T_n$ for some relatively prime integers $p_n$ an $q_n$.
By an isotopy again, we assume that the disk $D$ also lies in good position with respect to the line segment $\widetilde \gamma_n$ in $\mathbb R^2$ joining the points $(0, 0)$ and $(p_n, q_n)$ as in Lemma \ref{lem:torus word}.
We reposition the arc $K \cap T_n$ to an arc $K \cap T \times [n, n+1]$ by isotopy fixing the portion of $K$ outside $T_n$ as in the above.
We consider the arc $\lambda_n$ as an embedding $\lambda_n = \lambda_n(t)$ in $T_n$, for $t \in [n, n+1]$, connecting $b_n$ to $w_n$.
See Figure \ref{fig:last_level} (a).

\begin{center}
\includegraphics[width=12.7cm, clip]{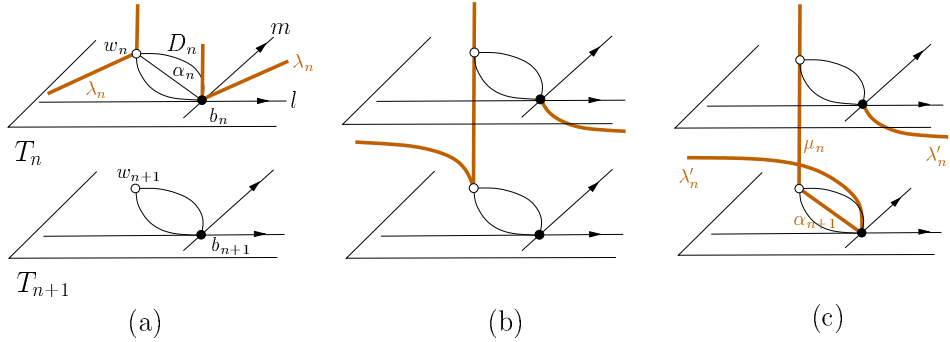}
\captionof{figure}{The reposition of $K \cap T \times [n, n+1]$.}
\label{fig:last_level}
\end{center}

Then the arc $\lambda_n$ can be moved by isotopy of $K$ to the arc that is the union of the two arcs $\{(\lambda_n(t), t)|t \in [n, n+1]\}$ and $w_n \times [n, n+1]$, fixing the outside portion of $K$ (see Figure \ref{fig:last_level} (b)).
Next, by an isotopy of the arc $\{(\lambda_n(t), t)|t \in [n, n+1]\}$, we move its bottom endpoint $w_{n+1}$ to the point $b_{n+1}$ along $\partial D_{n+1}$ so that the resulting arc, denoted by $\lambda'_n$, projects into the $(p_n, q_n)$-torus knot in $T_n$ which is the image of the line segment $\widetilde \gamma_n$ in $\mathbb R^2$.
Finally, adding the arc $\alpha_{n+1}$ to the unions of arcs $\lambda'_n$ and $w \times [n, n+1]$, we have a repositioning of $K$ inside $T \times [n, n+1]$ (see Figure \ref{fig:last_level} (c)).
Denoted by $\mu_n$ the arc $w \times [n, n+1]$, the pair of arcs $\{\lambda'_n, \mu_n\}$ represents the torus word, say $\omega_n$, of type $(p_n, q_n)$, by Lemma \ref{lem:torus word}.
Finally, we obtain a braid description
\begin{center}
$\omega = \omega_1 s^{k_1} \omega_2 s^{k_2} \cdots \omega_{n-1} s^{k_{n-1}} \omega_n$,
\end{center}
of the $(1, 1)$-position, where $\omega_j$ is the torus word of type $(p_j, q_j)$ for each $j \in \{1, 2, \ldots, n\}$ and the integer $k_j$ is determined by the half twists of the portion $K \cap \partial D \times [j+\frac{1}{2}, j+1]$ for each $j \in \{1, 2, \ldots, n-1\}$.

\medskip

Conversely, let $\omega$ be a braid description of the $(1, 1)$-position with $(1, 1)$-length $n$.
Then $\omega$ is written as $\omega_1 s^{k_1} \omega_2 s^{k_2} \cdots \omega_{n-1} s^{k_{n-1}} \omega_n$ in the above.
We may assume that the word $\omega$ is represented by a pair of arcs in a collar $T \times [1, n+1]$ of $T = T_1$ in $V$.
Then the knot $K$ is the union of the two arcs in the pair with the arcs $\alpha_1$ in $T_1$ and $\alpha_{n+1}$ in $T_{n+1}$.
By the reverse process of the above argument, we obtain a surface $F_n$ in $T \times [1, n+1]$ in which $K$ is positioned in $n$-level position with respect to $T$.
\end{proof}

The following is our main result, a direct consequence of Lemma \ref{lem:leveling_and_braid_position}.

\begin{theorem}
The level number of a $(1,1)$-position equals the $(1, 1)$-length of the position.
The level number of a $(1, 1)$-knot equals the $(1, 1)$-length of the knot.
\label{thm:equality}
\end{theorem}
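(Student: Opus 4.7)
The plan is to derive both equalities directly from Lemma \ref{lem:leveling_and_braid_position}, which already encodes the two inequalities we need. Fix a $(1,1)$-position of $K$ with respect to the standard torus $T$, and write $L$ for its level number and $N$ for its $(1,1)$-length.

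For the inequality $N \le L$, I would take a leveling realizing the minimum, an $L$-level position of $K$ with respect to $T$, and apply the first half of Lemma \ref{lem:leveling_and_braid_position}. This produces a braid description $\omega$ with $|\omega| \le L$. A small but essential point is that the word $\omega$ constructed in the proof of the lemma is manifestly of the form $\omega_1 s^{k_1} \omega_2 \cdots s^{k_{n-1}} \omega_n$ with each $\omega_j$ a torus word and $n \le L$, that is, already a $(1,1)$-word. Hence $\|\omega\| \le |\omega| \le L$, and since $N$ is the infimum of $\|\omega\|$ over all braid descriptions of the position, $N \le L$.

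For the reverse inequality $L \le N$, I would start from a braid description $\omega$ of the position together with a $(1,1)$-word $\omega'$ that is $(1,1)$-equivalent to $\omega$ and satisfies $|\omega'| = N$. Because $(1,1)$-equivalent words give rise to $(1,1)$-isotopic knots, $\omega'$ is itself a braid description of the same $(1,1)$-position up to $(1,1)$-isotopy, and $(1,1)$-isotopy preserves the level number. Feeding $\omega'$ into the second half of Lemma \ref{lem:leveling_and_braid_position} then builds a leveling with exactly $N$ level tori, so $L \le N$. Combining these gives the equality for a fixed position, and the statement for the knot follows by minimizing both invariants over all $(1,1)$-positions of $K$.

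I do not anticipate a serious obstacle, since the two halves of the lemma are essentially the two inequalities already. The only care needed is the bookkeeping indicated above: first, verifying that the output of the lemma in the forward direction is literally a $(1,1)$-word rather than merely equivalent to one, so that its $(1,1)$-length is genuinely bounded by $|\omega|$; and second, observing in the reverse direction that replacing a braid description by a $(1,1)$-equivalent one yields the same underlying $(1,1)$-position of $K$, which is why its level number is unaffected.
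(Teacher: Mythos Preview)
Your proposal is correct and matches the paper's approach exactly: the paper states that Theorem~\ref{thm:equality} is ``a direct consequence of Lemma~\ref{lem:leveling_and_braid_position}'' and gives no further argument, so your write-up is simply a careful unpacking of that sentence. The two bookkeeping points you flag (that the word produced by the forward direction of the lemma is already a $(1,1)$-word, and that passing to a $(1,1)$-equivalent word does not change the underlying $(1,1)$-position) are precisely the details one needs to make the ``direct consequence'' explicit.
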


\section{Braid descriptions of $2$-bridge Knots}
\label{sec:example}

In this section, we will discuss the braid descriptions and level numbers of the $(1, 1)$-positions for  $2$-bridge knots.
The braid descriptions for $2$-bridge knots were also introduced in \cite{CMcC2}, which we will refine in detail.
A knot $K$ in the $3$-sphere is called a {\it $2$-bridge knot} if there is a $2$-sphere $S$ such that $S$ splits the $3$-sphere into two $3$-balls $B_1$ and $B_2$, and each of $K \cap B_i$ consists of two disjoint properly embedded arc in $B_i$ parallel into $S$.
We call such a decomposition for a $2$-bridge knot $K$ a {\it $2$-bridge position} for $K$.
It is well-known that the $2$-bridge position for any $2$-bridge knot is unique up to equivalence (see \cite{Scb}).
The $(1, 1)$-positions of $2$-bridge knots can be described by their tunnels as we will see in the following.

A {\it tunnel} for a knot $K$ in the $3$-sphere is a simeple arc $\tau$ such that $\tau \cap K = \partial \tau$ and the exterior of $K \cup \tau$ is the genus 2 handlebody.
A knot which admits a tunnel is called a {\it tunnel number one knot}.
Any $(1, 1)$-knot $K$ is tunnel number one.
In fact, once we have a $(1, 1)$-position of $K$ with respect to a standard torus which splits the $3$-sphere into two solid tori $V$ and $W$, it is not hard to find tunnels $\mu$ in $V$ and $\mu'$ in $W$ for $K$.
The tunnel $\mu$ is an arc in $V$ whose endpoints lie in the interior of the arc $\alpha = V \cap K$ such that $V$ is a regular neighborhood of $\mu \cup \alpha$ (after pushing the endpoints of $\alpha$ slightly into the interior of $V$).
The tunnel $\mu'$ in $W$ is described in a similar way.
See Figure \ref{fig:decomp} (a).
Such tunnels $\mu$ and $\mu'$ are called {\it $(1, 1)$-tunnels} for $K$ (with respect to the standard torus of the $(1, 1)$-position).

\bigskip

\begin{center}
\includegraphics[width=9.5cm, clip]{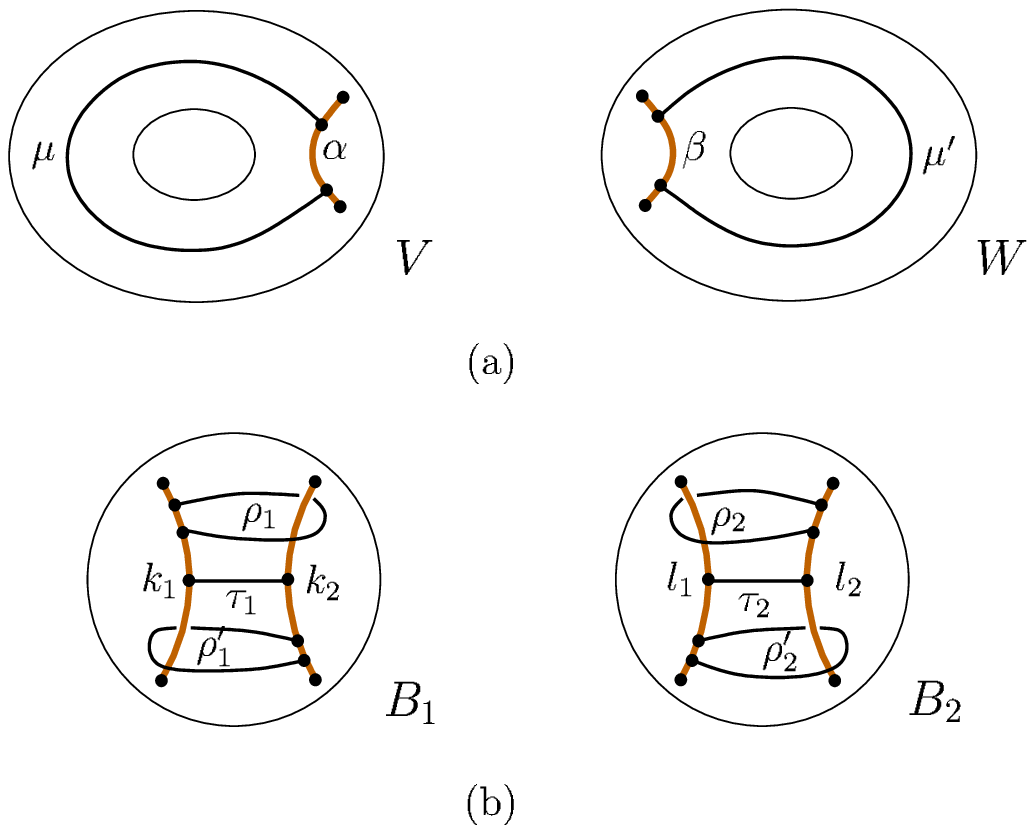}
\captionof{figure}{}
\label{fig:decomp}
\end{center}

Now let $K$ be a $2$-bridge knot and let $S$ be a $2$-sphere which splits the $3$-sphere into two $3$-balls $B_1$ and $B_2$, and each of $K \cap B_i$ consists of two disjoint properly embedded arc in $B_i$ parallel into $S$.
We denoted by $k_1$ and $k_2$ the two arcs $K \cap B_1$ in $B_1$ and by $l_1$ and $l_2$ the two arcs $K \cap B_2$ in $B_2$ as in Figure \ref{fig:decomp} (b).
In the figure, the arc $\tau_1$ in $B_1$ is an arc whose endpoints lie in the interiors of $k_1$ and $k_2$ such that a regular neighborhood of $k_1 \cup \tau_1 \cup k_2$ is $B_1$ (after pushing the endpoints of $k_1$ and $k_2$ slightly into the interior of $B_1$).
The arc $\rho_1$ in $B_1$ is an arc whose endpoints lie in the interior of $k_1$ such that $\cl(B_1 - \Nbd(k_2))$ is a regular neighborhood of the union of $\rho_1$ and the sub-arc of $k_1$ having the same endpoints of $\rho_1$.
The other arcs $\rho'_1$, $\tau_2$, $\rho_2$ and $\rho'_2$ are described similarly.
It is immediate that the six arcs $\tau_1$, $\tau_2$, $\rho_1$, $\rho_2$, $\rho'_1$ and $\rho'_2$ are all $(1, 1)$-tunnels for the $2$-bridge knot $K$.
Furthermore, it was shown in \cite{Ko} in that every tunnel of a $2$-bridge knot $K$ is isotopic to one of the six tunnels.

On the other hand, there is a natural way to obtain a $(1, 1)$-position from a $2$-bridge position of a $2$-bridge knot.
First, we choose an arc, say $k_1$, among the four arcs of $K \cap B_i$ in a  $2$-bridge position of $K$ as in Figure \ref{fig:decomp} (b).
Let $V = \cl(B_1 - \Nbd(k_1))$ and $W = B_2 \cup \Nbd(k_1)$.
Then we have a $(1, 1)$-position of $K$ with respect to the torus $\partial V = \partial W$.
There are three more $(1, 1)$-positions obtained from the $2$-bridge position depending on the choice of other arcs $k_2$, $l_1$ and $l_2$ in a similar way.
We call such a $(1, 1)$-position a {\it stabilization} of the $2$-bridge position.
Since the $2$-bridge position of a $2$-bridge knot $K$ is unique up to equivalence, there are four equivalence classes of stabilizations depending on the choice of one of the four arcs.
Furthermore, from Kobayashi and Saeki \cite{KoS} (and later from \cite{CK}), it was shown that any $(1, 1)$-position of a $2$-bridge knot can be obtained in this way.
We observe that, in the $(1, 1)$-position from the choice of arc $k_1$, the $(1, 1)$-tunnels $\mu$ in $V$ and $\mu'$ in $W$ in Figure \ref{fig:decomp} (a) are identified with $\rho'_1$ and $\tau_2$ in Figure \ref{fig:decomp} (b) respectively.
For the choice of other arcs $k_2$, $l_1$ and $l_2$, the tunnel $\mu$ in $V$ is identified with $\rho_1$, $\rho_2$ and $\rho_2'$ respectively, and the tunnel $\mu'$ in $W$ is identified with $\tau_2$, $\tau_1$ and $\tau_1$ respectively.
For each $\rho \in \{\rho_1, \rho'_1, \rho_2, \rho'_2\}$, the $(1, 1)$-position having the $(1, 1)$-tunnel $\rho$ will be called the {\it $\rho$-position} simply.
Of course, some of the four $(1, 1)$-positions are possibly identified by isotopy.

\medskip

There is a well-known classification of 2-bridge knots $K$ by rational parameters $p/q$.
Here $p$ is an odd positive integer, and we may assume that $q$ is even and $0 < |q| < p$.
We call $K = K_{p/q}$ the  $2$-bridge knot of type $(p, q)$.
Then the parameter $p/q$ can be expanded into a unique continued fraction of the form
$$[2a_1, 2b_1, \ldots, 2a_n, 2b_n] = 2a_1 + (2b_1 + (\cdots +(2a_n + (2b_n)^{-1})^{-1}\cdots)^{-1},$$ 
where $a_i$ and $b_i$ are non-zero integers for $i \in \{1, 2, \ldots, n\}$ (we follow the convention in \cite{MoS}).
A $2$-bridge knot $K$ is then put in the form shown in Figure \ref{fig:2bridge}, in which each circle indicates a block of some nonzero number of half-twists. Each of the blocks in the middle row has $2a_i$ half-twists, and each in the top row has $2b_i$ half-twists.
We assume that $a_i$ is positive for right-hand twists, and $b_i$ is positive for left-hand twists (for example, see Figure \ref{fig:2bridge_example}).
The presentation in Figure \ref{fig:2bridge} is called the {\it Conway presentation} of  $K_{p/q}$, and the unique continued fraction expansion $[2a_1, 2b_1, \ldots, 2a_n, 2b_n]$ of $p/q$ is called the {\it Conway parameters} of the knot.

\begin{center}
\includegraphics[width=11cm, clip]{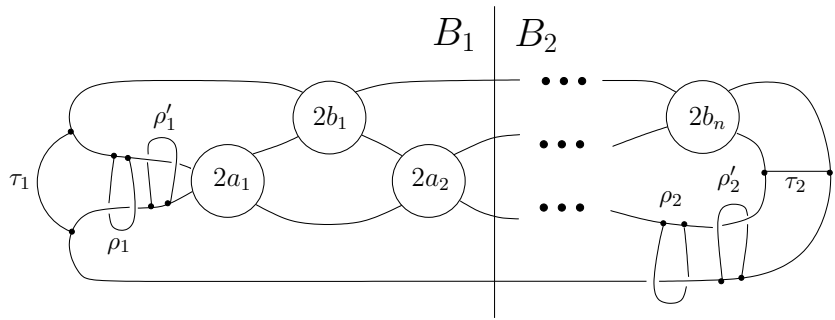}
\captionof{figure}{A Conway presentation of a $2$-bridge knot $K$ with its tunnels.}
\label{fig:2bridge}
\end{center}

Figure \ref{fig:2bridge} also illustrates a $2$-bridge position of $K_{p/q}$ with the two $3$-balls $B_1$ and $B_2$, and the six tunnels.
Then for each  $\rho \in \{\rho_1, \rho'_1, \rho_2, \rho'_2\}$, the solid torus $V$ in the $\rho$-position can be identified with $\Nbd(\rho \cup \alpha)$ where $\alpha$ is one of the arcs of $K - \rho$ that lies in the $3$-ball containing $\rho$.

\begin{theorem}
Let $K = K_{p/q}$ be a $2$-bridge knot with the Conway presentation as in Figure \ref{fig:2bridge}, and let $[2a_1, 2b_1, \ldots, 2a_n, 2b_n]$ be the {\it Conway parameters} of $K$.
Then
\begin{enumerate}
\item the word $m s^{2b_n} l^{-a_n}\cdots s^{2b_1} l^{-a_1}$ is a braid description of the $\rho_1$-position and the $\rho_1'$-position, and
\item the word $m s^{-2a_1} l^{b_1}\cdots s^{-2a_n} l^{b_n}$ is a braid description of the $\rho_2$-position and the $\rho_2'$-position.
\end{enumerate}
\label{thm:2bridgeword}
\end{theorem}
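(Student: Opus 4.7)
The plan is to realize each of the four $\rho$-positions explicitly inside the Conway presentation of $K_{p/q}$ and then, by applying Lemma \ref{lem:torus word} in each slab of a collar $T \times [0, 1]$, read off the braid word by tracking how $K \cap V$ winds through the successive twist blocks.

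For the $\rho_1$-position I would take $V$ to be a regular neighborhood of the circle $\rho_1 \cup \sigma$, where $\sigma$ is the sub-arc of $k_1$ joining the endpoints of $\rho_1$. The arc $K \cap V$ is then essentially all of $k_1$, while $K \cap W$ is a single arc running through $l_2$, $k_2$, and $l_1$. Following the conventions of Section \ref{sec:braid_group}, I would take $m$ to bound a meridian disk of $V$ dual to $\sigma$, take $l$ to bound a compressing disk in $W$ running parallel to $\rho_1$, and place the basepoint $b = m \cap l$, the base arc $\alpha$ and the disk $D$ in the natural position relative to the Conway picture. One can then isotope the Conway presentation so that the $i$-th pair of twist blocks lies in its own slab of $T \times [0, 1]$, stacked so that the innermost block (index $n$) is closest to $T$.

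Next I would identify block by block the contribution of each slab to the braid word. Using Lemma \ref{lem:torus word} in each slab, the top-row block of $2b_i$ half-twists is directly $b_i$ full twists of the two strands of $k_1$ about the disk $D$, contributing $s^{2b_i}$; the middle-row block of $2a_i$ half-twists, which twists $k_1$ about the core-carrying sub-arc of $k_2$, becomes $a_i$ longitude slides, contributing $l^{-a_i}$ (the sign fixed by the orientation conventions of Figure \ref{fig:torus}). The leading factor $m$ records the attachment of $\alpha$ at the endpoints of $K \cap V$, which together with $\sigma$ traces out a single meridian of $V$. Multiplying the contributions in the order determined by the stacking yields the word $m s^{2b_n} l^{-a_n} \cdots s^{2b_1} l^{-a_1}$. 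The $\rho_1'$-position is handled by the symmetric choice of $k_2$ in place of $k_1$; the symmetry of the Conway diagram that swaps the two strands of $B_1$ preserves the coordinate data on $T$ and so gives the same braid word.

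For the $\rho_2$ and $\rho_2'$ positions, the tunnel now lies in $B_2$ rather than $B_1$, and the analogous setup swaps the roles of the top and middle rows: the middle-row $2a_i$ blocks now lie ``along the core'' of the new $V$ and thus contribute $s^{-2a_i}$ (the sign flip reflecting the opposite crossing of these twists relative to the new $(m, l, D)$-data), while the top-row $2b_i$ blocks contribute longitude slides $l^{b_i}$. The indexing order also reverses, since moving outward from $T$ in this new setup traverses the Conway presentation from $i = 1$ to $i = n$, producing the word $m s^{-2a_1} l^{b_1} \cdots s^{-2a_n} l^{b_n}$.

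The main obstacle I anticipate is the careful bookkeeping of signs and exponents: verifying that each twist block contributes exactly the stated factor (rather than some $(1,1)$-equivalent variant), that the leading $m$ appears with the stated multiplicity, and that the $\rho_1$ and $\rho_1'$ positions (respectively $\rho_2$ and $\rho_2'$) truly yield the same braid word under $\sim$. Each of these verifications reduces to a direct inspection in a single slab of $T \times [0, 1]$ via Lemma \ref{lem:torus word}, but the matching of the Conway twist orientations against the reduced braid group conventions of Section \ref{sec:braid_group} must be performed with care.
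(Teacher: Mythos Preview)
Your approach to the $\rho_1$-position is essentially the paper's: both place the Conway presentation in braid position and read the word off block by block, the paper by pointing to a figure and you by spelling out the slab mechanism. The remaining cases are handled differently, however. For $\rho_2$, the paper does not set up new $(m,l,D)$-data and repeat the sign analysis as you propose; instead it flips the Conway diagram (bottom strand to top) to obtain the alternative parameters $[-2b_n,-2a_n,\ldots,-2b_1,-2a_1]$ and then reapplies the $\rho_1$ reading verbatim, which collapses all sign bookkeeping to the single $\rho_1$ case. For $\rho_1'$, your appeal to a ``symmetry of the Conway diagram that swaps the two strands of $B_1$'' is too quick: the presentation in Figure~\ref{fig:2bridge} does not treat $k_1$ and $k_2$ symmetrically in any evident way, and the paper produces a nontrivial three-step isotopy (rotating two sub-balls to transfer twists, followed by a global $\pi$-rotation) that carries $K\cup\rho_1'$ to $K\cup\rho_1$ while reproducing the original Conway presentation exactly. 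You should expect to need an argument of comparable content there, not a one-line symmetry claim. One small technical note: Lemma~\ref{lem:torus word} characterizes torus words, so it governs the $l^{-a_i}$ contributions but not the $s^{2b_i}$ factors, which come directly from the definition of $s$ as a half-twist in $D\times[0,1]$.
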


\begin{proof}
Consider the $\rho_1$-position first.
Then the Conway presentation of $K$ as in Figure \ref{fig:2bridge} can be put into a braid position as described as in Figure \ref{fig:2bridge_example}.
The boundary of the solid torus containing $\rho_1$ in the figure is actually the level torus $T \times \{1\}$ in $N = T \times [0, 1]$ in our setup in Section \ref{sec:braid_group}.
Then the braid description of the $\rho_1$-position can be read off directly.
For the $\rho_2$-position, flip the bottom strand to the top in figure \ref{fig:2bridge} to get the Conway parameters $[-2b_n, -2a_n, ...,-2b_1, -2a_1]$.
Then the same interpretation to the case of $\rho_1$-position can be applied.
(In fact, if $p/q' = [-2b_n, -2a_n, ...,-2b_1, -2a_1]$, then $qq'=\pm 1$,$\mod p$, and $K_{p/q}$ and $K_{p/q'}$ are equivalent.)

\begin{center}
\includegraphics[width=8cm, clip]{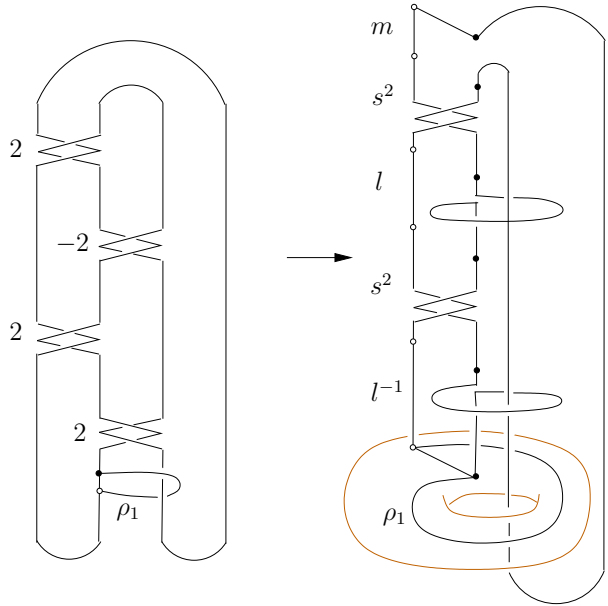}
\captionof{figure}{The $2$-bridge knot with Conway parameters $[2, 2, -2, 2]$. For the Conway presentation of any $2$-bridge knot with parameters $[2a_1, 2b_1, \ldots, 2a_n, 2b_n]$, the braid description of the $\rho_1$-position starts with a single $m$ and the remaining part is a composition of only powers of $s$ and $l$.}
\label{fig:2bridge_example}
\end{center}

For the $\rho'_1$-position, as illustrated in Figure \ref{fig:2bridge_example2}, there is an isotopy sending $K \cup \rho'_1$ to $K \cup \rho_1$ such that the resulting Conway presentation is exactly same to the original one.
Thus we have the same interpretation of the $\rho'_1$-position as the case of the $\rho_1$-position.
The case of the $\rho'_2$-position can be done in the same way.
\end{proof}

\begin{center}
\includegraphics[width=12.4cm, clip]{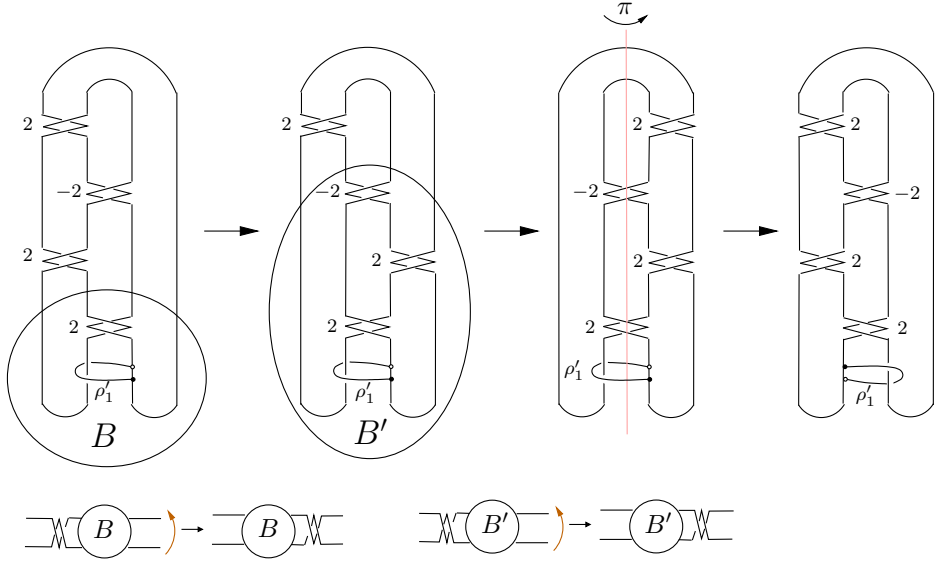}
\captionof{figure}{The first two steps describe the rotations of the balls $B$ and $B'$ with strands, sending the left twists to the right. The final step is the $\pi$-rotation about the vertical axis.}
\label{fig:2bridge_example2}
\end{center}

We note that the $\rho_1$-position and $\rho_1'$-position (and the $\rho_2$-position and $\rho_2'$-position) are isotopic to each other as observed in Figure \ref{fig:2bridge_example2}.
That is, there is an isotopy of the $3$-sphere sending the solid torus containing $\rho_1$ of the $\rho_1$-position to the the solid torus containing $\rho'_1$ of the $\rho_1'$-position.
Thus the $(1, 1)$-lengths (and so the level numbers) of the $\rho_1$-position and $\rho_1'$-position (and the $\rho_2$-position and $\rho_2'$-position) are the same.

\begin{example}
The right-handed trefoil knot $K_{3/2}$ has the Conway parameters $[-2,2]$.
Thus we have that the braid description of the $\rho_1$-position of $K_{3/2}$ is $m s^{2} l $.
We already know the level number of $K_{3/2}$ is $1$ since it is a torus knot, but it can be also verified by considering
$$m s^{2} l \sim s^{-1} m^{-1} l^{-1} s^{-1} \sim m^{-1} l^{-1} \sim l^{-1} m^{-1} l^{-1} m^{-2},$$
and $l^{-1} m^{-1} l^{-1} m^{-2}$ is a torus word.
\end{example}

The rational parameter $p/q$, where $q$ is even and $0 < |q| < p$, admit more general continued fraction expansions of the form $[2c_1, d_1, \ldots, 2c_k, d_k]$ and $[c'_1, 2d'_1, \ldots, c'_{k'}, 2d'_{k'}]$, where $c_j$, $d_j$, $c'_{j'}$ and $d'_{j'}$ are nonzero integers.
(The expansion of each of these forms is not unique in general.)
The $2$-bridge knot $K = K_{p/q}$ can be put in the form in Figure \ref{fig:2bridge} with these parameters.
That is, change the number of half-twists $2a_j$ to $2c_j$ (or $c'_{j'}$) and $2b_j$ to $d_j$ (or $2d'_{j'}$).
Then, the braid descriptions of the $(1, 1)$-positions of $K$ can be obtained from these ``non-Conway parameters'' by the same way as in the proof of Theorem \ref{thm:2bridgeword}.
Hence we have a more general version of Theorem \ref{thm:2bridgeword} as follows.

\begin{theorem}
Let $K = K_{p/q}$ be a $2$-bridge knot, and let $[2c_1, d_1, \ldots, 2c_k, d_k]$ and $[c'_1, 2d'_1, \ldots, c'_{k'}, 2d'_{k'}]$ be the expansions of $p/q$ as above.
Then,
\begin{enumerate}
\item the word $m s^{d_k} l^{-c_k}\cdots s^{d_1} l^{-c_1}$ is a braid description of the $\rho_1$-position and the $\rho_1'$-position, and
\item the word $m s^{-c'_1} l^{d'_1}\cdots s^{-c'_{k'}} l^{d'_{k'}}$ is a braid description of the $\rho_2$-position and the $\rho_2'$-position.
\end{enumerate}
\label{thm:2bridgeword2}
\end{theorem}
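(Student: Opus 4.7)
My plan is to mimic the proof of Theorem \ref{thm:2bridgeword} using a generalized Conway-type diagram of $K_{p/q}$ whose twist counts come from the non-Conway expansion. That proof used only two local facts about the diagram in Figure \ref{fig:2bridge}: a middle-row block of $2a$ half-twists contributes $l^{-a}$ to the braid word, and a top-row block of $2b$ half-twists contributes $s^{2b}$. Neither fact requires $2b$ to be even: the generator $s$ is a single half-twist, so a top-row block of any integer number $d$ of half-twists contributes $s^{d}$. What \emph{is} needed is that the middle-row blocks have an even number of half-twists, since we need each such block to leave its two strands in the same vertical order (and thus contribute an integer power of $l$) so that the diagram can be placed in braid position with respect to the torus bounding $\Nbd(\rho \cup \alpha)$.

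First I would invoke the classical dictionary between continued fraction expansions of $p/q$ and rational tangle / 2-bridge diagrams to realize $K_{p/q}$ by a diagram in exactly the format of Figure \ref{fig:2bridge}, but with the middle-row blocks containing $2c_j$ (resp.\ $c'_j$) half-twists and the top-row blocks containing $d_j$ (resp.\ $2d'_j$) half-twists. For the expansion $[2c_1,d_1,\ldots,2c_k,d_k]$, the middle-row parity condition is built in, so this diagram is suitable for reading off the $\rho_1$-position braid word. For the expansion $[c'_1,2d'_1,\ldots,c'_{k'},2d'_{k'}]$ the parity pattern is swapped, matching instead the setup for the $\rho_2$-position after the top/bottom flip.

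Second, with the diagram in place, I would repeat verbatim the argument illustrated in Figure \ref{fig:2bridge_example}: the boundary of $\Nbd(\rho_1 \cup \alpha)$ is identified with the level torus $T \times \{1\}$ in our braid setup, and reading off from right to left produces the word $m\, s^{d_k} l^{-c_k}\cdots s^{d_1} l^{-c_1}$. The $\rho'_1$-position is handled by the same isotopy shown in Figure \ref{fig:2bridge_example2}, which sends $K\cup \rho'_1$ to $K \cup \rho_1$ and preserves the generalized Conway diagram. For the $\rho_2$- and $\rho'_2$-positions, flipping the bottom strand of the diagram to the top exchanges the roles of the two rows, so the generalized Conway diagram associated with the second expansion becomes, after the flip, a diagram in the correct orientation to apply the $\rho_1$-case argument, yielding the word $m\, s^{-c'_1} l^{d'_1}\cdots s^{-c'_{k'}} l^{d'_{k'}}$.

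The main obstacle I anticipate is the bookkeeping of the first step: justifying rigorously that each non-Conway expansion within the two allowed parity patterns really does correspond to a diagram of $K_{p/q}$ (rather than a different knot or a two-component link). The cleanest route is to use Conway's rational tangle calculus to show that any such expansion can be transformed into a Conway expansion of the same $p/q$ by algebraic moves that correspond to isotopies of the diagram; then the generalized diagram is isotopic to a standard Conway diagram of $K_{p/q}$, and the braid-word-extraction argument of Theorem \ref{thm:2bridgeword} applies word-for-word.
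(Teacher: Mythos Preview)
Your proposal is correct and follows essentially the same approach as the paper: the paper simply asserts that $K_{p/q}$ can be put in the form of Figure~\ref{fig:2bridge} with the non-Conway twist counts, and then states that the braid description is read off exactly as in the proof of Theorem~\ref{thm:2bridgeword}. Your extra remarks about why the middle-row blocks must carry even twist counts and about verifying via rational-tangle calculus that the generalized diagram still represents $K_{p/q}$ are details the paper leaves implicit.
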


Now we can give an upper bound of the level number of each $(1, 1)$-position of $2$-bridge knot.

\begin{theorem}
Let $K = K_{p/q}$ be a $2$-bridge knot, and let $[2c_1, d_1, \ldots, 2c_k, d_k]$ and $[c'_1, 2d'_1, \ldots, c'_{k'}, 2d'_{k'}]$ be the expansions of $p/q$ as above.
Then,
\begin{enumerate}
\item an upper bound of the level number of the $\rho_1$-position and the $\rho_1'$-position is $|\alpha_2|+\cdots+|\alpha_k|+2$, where $\alpha_i=\min\{|c_i|,2\}$, and
\item an upper bound of the level number of the $\rho_2$-position and the $\rho_2'$-position is $|\beta_2|+\cdots+|\beta_k|+2$, where $\beta_i=\min\{|d_i'|,2\}$.
\end{enumerate}
\label{thm:lenBraid}
\end{theorem}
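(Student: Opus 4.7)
The plan is to combine Theorems \ref{thm:equality} and \ref{thm:2bridgeword2} with an explicit $(1, 1)$-word decomposition of the braid description. For part (1), starting from the braid word
\[
\omega \;=\; m\, s^{d_k}\, l^{-c_k}\, s^{d_{k-1}}\, l^{-c_{k-1}}\, \cdots\, s^{d_1}\, l^{-c_1},
\]
I would first treat the two end-blocks: the leading $m$ is already the torus word of type $(0, 1)$, and by $(1, 1)$-equivalence I can append $m^{-1}$ on the right so that the trailing block $l^{-c_1} m^{-1}$ becomes the torus word of type $(-c_1, -1)$. These two blocks contribute the ``$+2$'' summand in the bound.

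For each middle block $l^{-c_i}$ with $2 \le i \le k$, the handling depends on $|c_i|$. If $|c_i| = 1$, then $l^{-c_i}$ is itself the torus word of type $(\mp 1, 0)$ and contributes one torus factor, matching $\alpha_i = 1$. If $|c_i| \ge 2$, I would exploit the trivial identity $l^{-c_i} = m^{-1} \cdot (m l^{-c_i})$ in $\mathcal B$ and separate the two factors by $s^{0}$: here $m^{-1}$ is the torus word of type $(0, -1)$ and $m l^{-c_i}$ is the torus word of type $(-c_i, 1)$. This yields exactly two torus factors no matter how large $|c_i|$ is, matching $\alpha_i = 2$. Assembling these pieces produces a $(1, 1)$-word $(1, 1)$-equivalent to $\omega$ with $\sum_{i=2}^{k} \alpha_i + 2$ torus factors, and Theorem \ref{thm:equality} then converts this into the claimed bound on the level number. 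Part (2) is proved by the same argument applied to the braid description $\omega = m s^{-c'_1} l^{d'_1} \cdots s^{-c'_{k'}} l^{d'_{k'}}$ of Theorem \ref{thm:2bridgeword2}.

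The main technical step I expect is verifying that $m l^{-c}$ really is the torus word of type $(-c, 1)$ and that $l^{-c} m^{-1}$ is the torus word of type $(-c, -1)$ for every nonzero integer $c$. This requires returning to the combinatorial definition of the directed path $\gamma_{(p, q)}$ and carrying out the case analysis on the signs of $p$ and $q$; in each case the directed path executes its single vertical step at one end and its $|c|$ horizontal steps at the other end, so that reading off the word produces exactly $m l^{-c}$ (respectively, $l^{-c} m^{-1}$). Once these identifications are established, the rest of the argument is a straightforward count.
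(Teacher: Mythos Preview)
Your proposal is correct and follows essentially the same strategy as the paper's proof: start from the braid description of Theorem~\ref{thm:2bridgeword2}, treat the leading $m$ and the trailing $l^{-c_1}m^{-1}$ as the two ``end'' torus words, and split each middle block $l^{-c_i}$ into at most two torus words. The only difference is cosmetic: the paper inserts the relation $smsm=1$ to rewrite $s^{d_i}l^{-c_i}$ as $s^{d_i+1}\,m\,s\,m l^{-c_i}$ (using $m$ and $ml^{-c_i}$ as the two torus factors), whereas you insert the trivial identity $m^{-1}m=1$ to get $m^{-1}\,s^{0}\,ml^{-c_i}$; both yield the same count, and your verification that $ml^{-c}$ and $l^{-c}m^{-1}$ are torus words is exactly what the paper tacitly assumes.
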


\begin{proof}
By Theorem \ref{thm:2bridgeword2}, the word $ms^{d_k}l^{-c_k}\cdots s^{d_1}l^{-c_1}$ is a braid description of the $\rho_1$-position and the $\rho_1'$-position.
If $c_i\neq \pm1$, we have
$$s^{d_i} l^{-c_i} \sim s^{d_i+1} msml^{-c_i}$$
with $m$ and $ml^{-c_i}$ torus words.
For each $i \in \{2, \ldots, k\}$ with $c_i\neq \pm1$, we replace each $s^{d_i} l^{-c_i}$ with such an expression.
The final $s^{d_1}l^{-c_1}$ can be rewritten as $s^{d_1}l^{-c_1}m^{-1}$, giving the stated bound.
The proof for the second case is similar.
\end{proof}

\begin{example}
The knot $K_{13/4}$ has the Conway parameters $[4,-2,2,-2]$.
Thus the $\rho_1$-position has the braid description $m s^{-2} l^{-1} s^{-2} l^{-2}$, and the $\rho_2$-position has the braid description $m  s^{-4} l^{-1} s^{-2} l^{-1}$.
For the $\rho_1$-position,
$$m s^{-2} l^{-1} s^{-2} l^{-2} \sim m s^{-2} l^{-1} s^{-2} l^{-2} m^{-1}.$$
Thus the level number of the $\rho_1$-position is at most $3$.
Consider the $\rho_2$-position. We have
$$m s^{-4} l^{-1} s^{-2} l^{-1} \sim m s^{-4} s l^{2} s \sim m s^{-3} l^{2} \sim l^{-1} m s^{-3} l^{2} m^{-1}.$$
Since $l^{-1} m$ and $l^{2} m^{-1}$ are torus words, the level number of the $\rho_2$-position is at most $2$.
Since $K_{13/4}$ is not a torus knot, the level number of $K_{13/4}$ is $2$.
\end{example}

\begin{example}
In many cases, non-Conway parameters' provide sharper upper bounds of the level number than the Conway parameters. For example, consider the Conway parameters $[6,2,2,-2,2,-2,2,-2,2,-2,2,2,6,4]$.
By Theorem \ref{thm:lenBraid}, we have the upper bound of the level number of the knot with the parameters as
$\min \{|1|,2\}+\min \{|1|,2\}+ \min \{|1|,2\}+\min \{|1|,2\}+\min \{|1|,2\}+\min \{|3|,2\}+ 2 = 9$.
On the other hand, we observe that
$[6,2,2,-2,2,-2,2,-2,2,-2,2,2,6,4] = [6, 3, -10, 3, 6, 4]$,
and hence we have a smaller upper bound
$\min \{|-5|,2\}+\min \{|3|,2\}+ 2 = 6$.
\end{example}

\begin{example}
Let $K$ be a $2$-bridge knot with Conway parameters of the form $\pm[2,-2,...,2,-2]$ of length $2n$.
Then the word $\omega = m s^{-2}  l^{-1} \cdots s^{-2}  l^{-1}$ is a braid description of (any) $(1, 1)$-position of $K$.
Considering
$$\omega \sim m^{-1} l^n s^{-1} l^{-1}\sim m^{-1} l^n l s \sim m^{-1} l^{n+1}\sim l^{n} m^{-1} l^{n+1} m^{-1},$$
the level number of $K$ with this parameters is $1$ since $l^{n} m^{-1} l^{n+1} m^{-1}$ is a torus word.
For the parameters $-[2,-2,...,2,-2]$, we found a braid description that is $(1, 1)$-equivalent to the torus word $l^{-n-1} m^{-1} l^{-n} m^{-1}$ in the same way.
Thus $K$ must be a torus knot. In fact, since $\pm [2,-2,...,2,-2] = \pm (2n+1)/2n$, $K = K_{\pm (2n+1)/2n}$, which is the $(\pm 2n+1,2)$-torus knot.
\end{example}

\begin{example}
Let $K$ be a $2$-bridge knot with Conway parameters of the forms
\begin{itemize}
\item $[2a, 2b]$,
\item $\pm[2a, 2, \ldots,2 ,-2]$ or $\pm[2a,-2, \ldots,2 ,-2]$ (of length $2n$), and
\item $\pm[2, -2, \ldots,2, -2, 2, 2b]$ or $\pm[2, -2, \ldots, 2, -2, -2, 2b]$ (of length $2n$).
\end{itemize}

We verify that the level number of the $2$-bridge knots with those parameters is at most $2$.
First, for the parameters $[2a, 2b]$, the braid description of the $\rho_1$-position is
$$m s^{2b}  l^{-a}\sim m s^{2b}  l^{-a} m^{-1},$$
and similarly, the braid description of the $\rho_2$-position is
$$m s^{-2a}  l^{b}\sim m s^{-2a}  l^{b} m^{-1}.$$
Thus the level number of $K$ is at most $2$.

For the parameters $[2a, -2, \ldots, 2, -2]$, the braid description of the $\rho_1$-position is
$$m s^{-2}  l^{-1} \ldots s^{-2}  l^{-a} \sim m  s^{-1}  l^{n-1} s^{-1} l^{-a}\sim m  l s l  l^{n-1}  s^{-1} l^{-a}\sim l s^2 m  s  l^n  s^{-1} l^{-a}$$
$$\sim s^{-1} m^{-1} l^n  s^{-1} l^{-a} \sim m^{-1} l^n  s^{-1} l^{-a} \sim l^{n+1} m^{-1} l^n m^{-1} s^{0} m s^{-1} l^{-a}m^{-1}.$$
Thus the level number of the $\rho_1$-position is at most $3$.
On the other hand, the braid description of the $\rho_2$-position is
$$m s^{-2a+1} l^{n} \sim l^{-1} m s^{-2a+1} l^{n} m^{-1}.$$
Thus the level number of the $\rho_2$-position is at most $2$, so is that of $K$.

Similarly, for the parameters $-[2a, -2, \ldots, 2, -2]$, we have the braid description of $\rho_1$-position is
$$l^{-n} m^{-1} l^{-(n-1)} m^{-1} s^{0} m s^{1} l^{-a}m^{-1},$$
and so the level number of the $\rho_1$-position is at most $3$.
The braid description of $\rho_2$ is
$$m s^{-2a-1} l^{-n} \sim l^{-1} m s^{-2a-1} l^{-n} m^{-1},$$
so the level number is at most $2$.
The calculations for the parameters $\pm[2, -2, \ldots,2, -2, 2, 2b]$ and  $\pm[2, -2, \ldots, 2, -2, -2, 2b]$ are similar.
We conjecture that every $2$-bridge knot of level number at most two has Conway parameters one of the forms above.
\end{example}

\section{The $(-2,3,7)$-pretzel knot}
\label{sec:237}

The tunnels of the $(-2,3,7)$-pretzel knot, denoted by $K$, were
determined by D.~Heath and H.~Song~\cite{HS}.
There are exactly four mutually non-isotopic unknotting tunnels for $K$.
Figure~\ref{fig:four_tunnels} shows $K$ along with its four tunnels.
It can be verified directly that they are all $(1, 1)$-tunnels.
Actually, the tunnel $\tau_1$ cuts off $K$ into two arcs, and a regular neighborhood of the union of $\tau_1$ and one of the two arcs is a standard solid torus, say $V$.
Then $\tau_2$ is contained in the exterior of $V$, and hence $\tau_1$ and $\tau_2$ turn out to be $(1, 1)$-tunnels with respect to the standard torus $\partial V$ which gives a $(1, 1)$-position of $K$.
We call the $(1, 1)$-position determined by $\tau_1$ (together with $\tau_2$) simply the {\it $\tau_1$-position} of $K$.
Similarly, the tunnel $\tau_3$ (together with $\tau_4$) determine the {\it $\tau_3$-position} of $K$.
In this section, we show that the level numbers of the two $(1, 1)$-positions are two and obtain their braid descriptions.

\begin{center}
\includegraphics[width=7cm, clip]{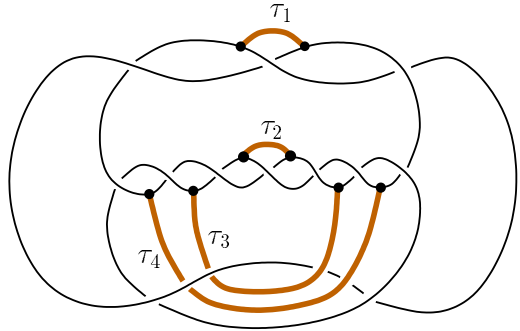}
\captionof{figure}{The four tunnels of the $(-2,3,7)$-pretzel knot $K$.}
\label{fig:four_tunnels}
\end{center}

A leveling of the $\tau_1$-position is indicated in Figure~\ref{fig:first_level_position}.
The isotopy shown in the sequence of drawings moves finally the knot to reside on two level tori, connected by two strands with a single half twist. The $\tau_1$-position is a slight perturbation
of this leveling of level number $2$.
Since $K$ is not a torus knot, the level number of the $\tau_1$-position is $2$.
From Figure~\ref{fig:first_level_position}, we notice that the portion of $K$ in the outer level torus determines the torus word of type $(0, -1)$, and that in the inner level torus determines the torus word of type $(-2, 7)$.
Thus a braid description of the $\tau_1$-position can be written and simplified as
$$m^{-1}s^{-1}ml^{-1}m^3l^{-1}m^3 ~\sim~ m^{-1}s^{-1}ml^{-1}m^3l^{-1} ~\sim~ m^2l^{-1}m^3l^{-1}.$$

\begin{center}
\includegraphics[width=12cm, clip]{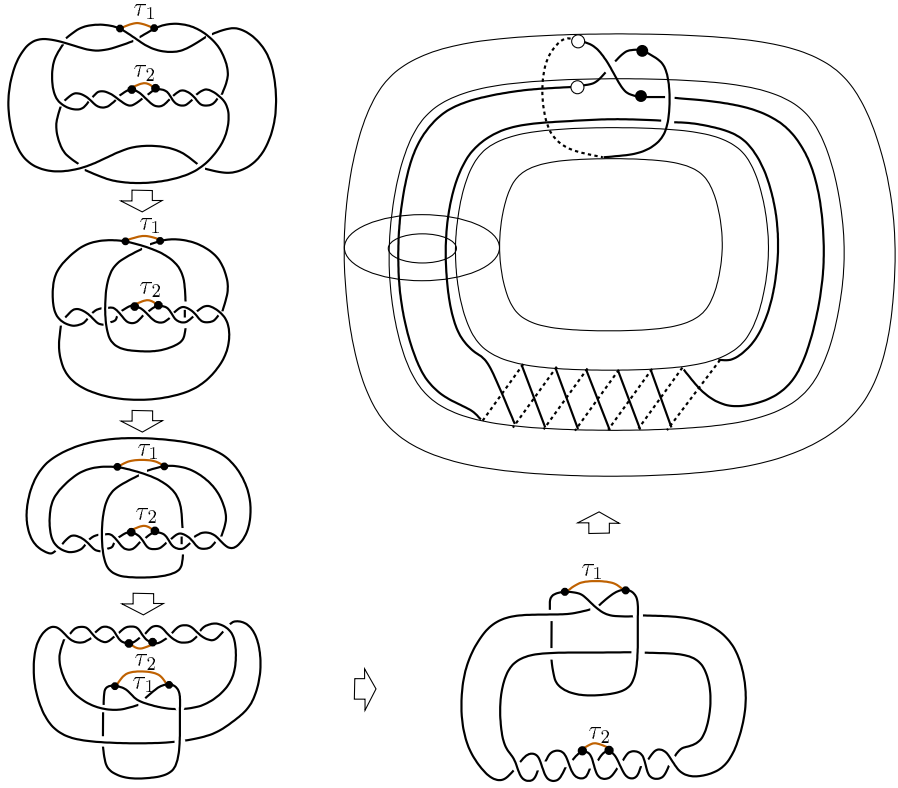}
\captionof{figure}{A leveling of $\tau_1$-position.}
\label{fig:first_level_position}
\end{center}

We turn now to $\tau_3$ and $\tau_4$. Similarly to
Figure~\ref{fig:first_level_position}, Figure~\ref{fig:tau3tau4} shows an isotopy that moves $K$ onto two level tori, connected by a single half-twist.
Again, the level number of the $\tau_3$-position is $2$.
The portion of $K$ in the outer level torus determines the torus word of type $(-1, 2)$, and that in the inner level torus determines the torus word of type $(2, -3)$.
Thus a braid description of the $\tau_3$-position can be written and simplified as

$$m l^{-1} m  s^{-1} m^{-1} l m^{-1} l m^{-1} ~\sim~ m l^{-1} m  s^{-1} m^{-1} l m^{-1} l.$$

We summarize the above verification as follows.

\begin{theorem}
The $(-2,3,7)$-pretzel admits exactly two $(1, 1)$-positions up to isotopy, the $\tau_1$-position and the $\tau_3$-position.
Their level number is $2$, and
\begin{itemize}
\item the $\tau_1$-position has the braid description $m^2l^{-1}m^3l^{-1}$, and
\item the $\tau_3$-position has the braid description
    $m l^{-1} m  s^{-1} m^{-1} l m^{-1} l$.
\end{itemize}
\end{theorem}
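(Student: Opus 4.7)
The plan is to exploit the Heath--Song classification \cite{HS} together with the leveling interpretation from Sections~\ref{sec:torus_leveling}--\ref{sec:characterization}. Since every $(1,1)$-position of a knot comes with a pair of $(1,1)$-tunnels (the cores of the two solid tori $V$ and $W$, pushed off of $K$) and since Heath--Song give exactly four tunnels $\tau_1,\tau_2,\tau_3,\tau_4$ of $K$, the first step is to verify that the unordered pairs $\{\tau_1,\tau_2\}$ and $\{\tau_3,\tau_4\}$ are precisely the two pairs that can arise as $(1,1)$-tunnel systems with respect to a common standard torus. This is already sketched at the start of Section~\ref{sec:237}: thickening $\tau_1$ together with one of the two arcs of $K-\tau_1$ yields a standard solid torus $V$ whose complementary solid torus is a regular neighborhood of $\tau_2$ together with an arc of $K$. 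The same argument applies to $\tau_3$ and $\tau_4$, and no cross-pairing is possible because $\tau_i$ and $\tau_j$ for $i\in\{1,2\}$, $j\in\{3,4\}$ are not complementary under any such decomposition. This gives the ``exactly two $(1,1)$-positions'' assertion.

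Next, for each $(1,1)$-position I would realize it as a leveling on two level tori joined by a single tube, as depicted in Figures~\ref{fig:first_level_position} and~\ref{fig:tau3tau4}. Concretely, I would push the portion of $K$ in each complementary solid torus onto a level torus, check that the two arcs of $K$ running through the tube intersect each meridian disk of the tube in one point, and record the resulting $2$-level position. By Lemma~\ref{lem:leveling_and_braid_position}, this immediately gives a braid description of the form $\omega_1 s^{n_1} \omega_2$, with $\omega_j$ a torus word read off from the isotopy class of the portion of $K$ in the $j$-th level torus.

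To compute the torus word types I would track how the portion of $K$ in each level torus wraps around the meridian and longitude of that torus, relative to the chosen base arc $\alpha$ and disk $D$. For the $\tau_1$-position, the outer piece represents the torus word of type $(0,-1)$ (a single meridian arc) and the inner piece that of type $(-2,7)$, joined by a single half-twist $s^{-1}$, giving the word $m^{-1}s^{-1}m l^{-1} m^3 l^{-1} m^3$ which reduces to $m^2 l^{-1} m^3 l^{-1}$ via the $(1,1)$-equivalence $\omega\sim\omega_L\omega\,\omega_M$ (absorbing the outer $m^{\pm1}$ factors) together with the relations in $\mathcal{B}$. For the $\tau_3$-position an analogous count gives types $(-1,2)$ and $(2,-3)$ joined by $s^{-1}$, yielding $m l^{-1} m s^{-1} m^{-1} l m^{-1} l m^{-1}$ which reduces to $m l^{-1} m s^{-1} m^{-1} l m^{-1} l$.

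Finally, the level number equals $2$ in both cases: the upper bound $\le 2$ is immediate from the explicit leveling, and the lower bound $\ge 2$ follows from Theorem~\ref{thm:equality} together with the remark that $(1,1)$-positions of level number one characterize torus knots, whereas the $(-2,3,7)$-pretzel knot is hyperbolic and in particular not a torus knot. The main obstacle I anticipate is the rigorous verification of the leveling isotopies from the figures — in particular, checking that after the prescribed isotopy the knot genuinely lies on the genus-$2$ surface $F_2$ with the correct intersection pattern on the tube — and the careful bookkeeping of orientations and of the good-position hypothesis on $D$ when reading off the torus word types $(0,-1)$, $(-2,7)$, $(-1,2)$, $(2,-3)$ from the pictures.
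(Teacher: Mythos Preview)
Your proposal is correct and follows essentially the same route as the paper: invoking Heath--Song for the tunnel classification and the pairing $\{\tau_1,\tau_2\}$, $\{\tau_3,\tau_4\}$ into two $(1,1)$-positions, exhibiting for each an explicit $2$-level position via the isotopies in Figures~\ref{fig:first_level_position} and~\ref{fig:tau3tau4}, reading off the torus word types $(0,-1)$, $(-2,7)$ and $(-1,2)$, $(2,-3)$ with a single $s^{-1}$, and then simplifying the resulting braid descriptions by $(1,1)$-equivalence. The lower bound on the level number via ``not a torus knot'' is also exactly the paper's argument.
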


\begin{center}
\includegraphics[width=12cm, clip]{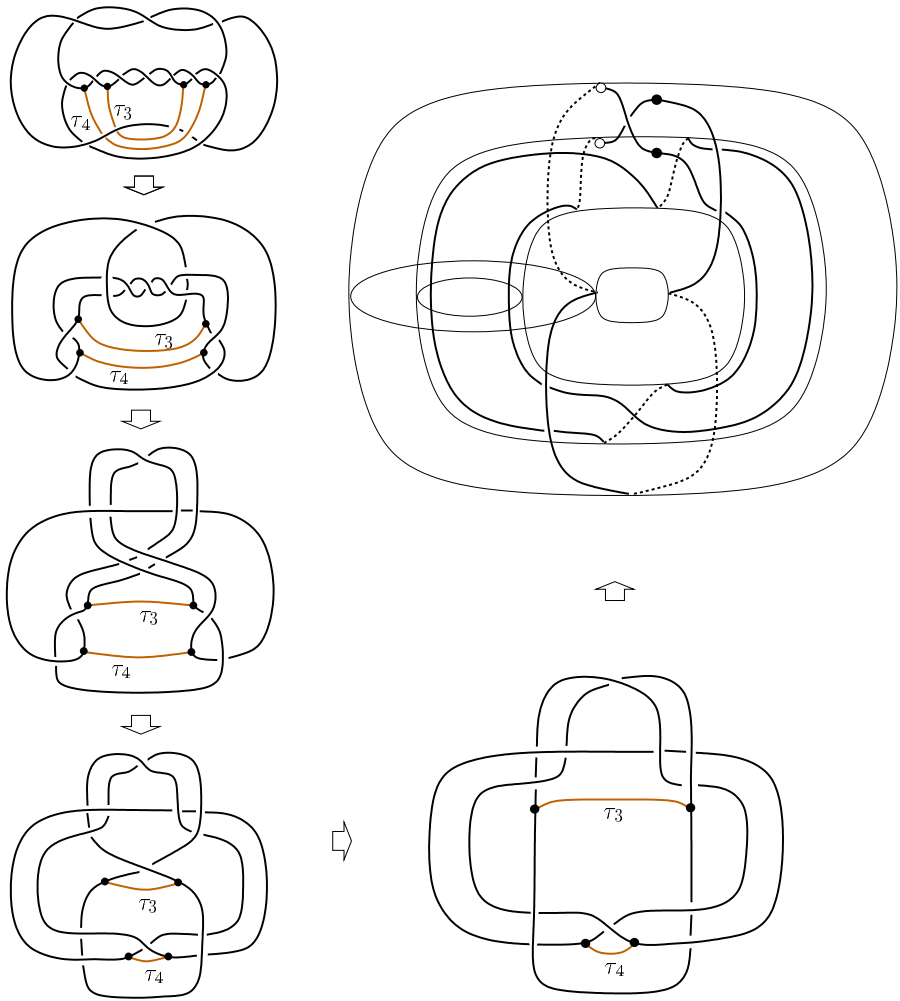}
\captionof{figure}{A leveling of $\tau_3$-position.}
\label{fig:tau3tau4}
\end{center}

\bibliographystyle{amsplain}

\end{document}